\newtheorem{theorem}{Theorem}[section]
\newtheorem{lemma}[theorem]{Lemma}
\newtheorem{proposition}[theorem]{Proposition}
\theoremstyle{remark}
\newtheorem{remark}{Remark}[section]
\numberwithin{equation}{section}
\numberwithin{theorem}{section}
\newcommand{\dvv}[1]{{\mbox{div}(#1)}}
\newcommand{\iQ}{\int_Q}
\newcommand{\iD}{\int_{\Sigma_1}}
\begin{document}
\allowdisplaybreaks
	\title{Boundary stabilization  of  the linear MGT equation with  partially 
		absorbing boundary data and  degenerate viscoelasticity}
	\date{}
\maketitle

\vspace{-2cm}

\centerline{\scshape Marcelo Bongarti$^*$}
\medskip
{\footnotesize
 \centerline{Weierstrass Institute for Applied Analysis and Stochastics (WIAS)}
   \centerline{Mohrenstrasse 39, 10117 Berlin, Germany}
} 

\medskip

\centerline{\scshape Irena Lasiecka and Jos\'e H. Rodrigues}
\medskip
{\footnotesize
 \centerline{Department of Mathematical Sciences, The University of Memphis}
   \centerline{IBS,  Polish Academy of Sciences, Warsaw
   }
}
\begin{abstract}The Jordan--Moore--Gibson--Thompson (JMGT) equation is a well-established and recently widely studied model for nonlinear acoustics (NLA). It is a third--order (in time) semilinear Partial Differential Equation (PDE) with a  distinctive feature of predicting the propagation of ultrasound waves at \textit{finite} speed. This is due to the heat phenomenon known as \textit{second sound}  which leads to hyperbolic heat-wave  propagation. In this paper, we consider the problem in the so called "critical" case, where free dynamics is unstable. In order to stabilize, we shall use boundary  feedback controls supported on a portion of the boundary only. Since the remaining part of the boundary is not "controlled", and the imposed boundary conditions of Neumann type  fail to saitsfy Lopatinski condition, several mathematical issues typical for  mixed problems within the context o  boundary stabilizability arise. To resolve these,  special geometric constructs  along with sharp trace estimates will be developed. The imposed  geometric conditions  are motivated by  the 
  geometry that is suitable for modeling  the problem of controlling (from the boundary) the acoustic pressure involved in medical treatments such as lithotripsy, thermotherapy, sonochemistry, or any other procedure involving High Intensity Focused Ultrasound (HIFU).

\end{abstract}

\section{Introduction} It was not until the first decade of the XXI century that third--order in time models became central in the study of the propagation of acoustic waves. Fattorini \cite{fattorini_ordinary_1969}  points out that models with three time derivatives are, in general, ill-posed. Nevertheless, from a modelling point of view, the appearance of a third derivative in time seems unavoidable. For once, if one seeks to understand the effects of (thermal) relaxation in the propagation of sound, a (by now) well--known strategy is the use of hyperbolic models for the heat flux (also known as \textit{second--sound} phenomena), which introduce one extra time derivative \cite{jordan_nonlinear_2008,jordan_second-sound_2014} in explicit models. Even more essential for controlling medical or engineering (acoustic) phenomena is the fact that the presence of a third time derivative predicts finite speed of propagation of the waves, a novelty in comparison to the classic parabolic models where heat fluxes are modeled through diffusion (Fourier's law). The issue of wellposedness is \textit{naturally} remedied in modern formulations of nonlinear acoustics -- in particular models leading to the so called HIFU field -- due to the structural damping effect caused by sound diffusivity in a given tissue or group of tissues \cite{kaltenbacher_wellposedness_2012}. This pleasant feature allows for a better understanding of the role of sound diffusion  and propagation  in the acoustic environment. Instead, classical second--order in time models (see \eqref{modnl2} below)  lead  to strong smoothing of solutions exhibited 
by the analyticity of the underlying dynamics.

Studies toward more accurate calibration of HIFU field generator devices are plenty, specially in the last few decades. Such devices are pivotal for several types of thermal therapy as treatment of ablating solid tumors of the prostate, liver, breast, kidney, brain, among others. The feature of raising the temperature of a focal region very rapidly with minimal damage to the biological material around it comes at the price of very high (sometimes even with formation of shocks) acoustic pressure \cite{chen2014acoustic}. It is, therefore, of paramount interest  the study of models that provide suitable (optimal) profiles for the HIFU devices ensuring that the acoustic pressure will remain within safety range.
In fact, in recent years we have witnessed a large body of work dealing with the questions of wellposedness and stability of third order dynamics, in both linear and nonlinear versions \cite{kaltenbacher_jordan-moore-gibson-thompson_2019,kaltenbacher_mathematics_2015} and on bounded \cite{marchand_abstract_2012} and unbounded ($\mathbb{R}^n$) domains \cite{pellicer_optimal_2019}. However, very little is known regarding how the third order model responds to the {\it inputs from the boundary-particularly with low regularity}. This particular interest needs no defense, due to prevalence of boundary control problems (imaging, HIFU)  associated with acoustic waves that can be actuated just on the boundary of the spatial region.  Since the model itself can be seen as a hyperbolic system \cite{bucci_cauchy-dirichlet_2020} -- which is however {\it characteristic} -- one may expect mathematical  interest and  the associated challenges.  Of great physical and mathematical interest are issues such as wellposedness with low regularity boundary data and a potential  stabilizing effect of boundary  damping. The latter  is particularly of interest in the case when natural viscoelastic damping (strongly compromised  by the second sound phenomenon) is either very weak or even non--existent. 

This paper accomplishes an important  step towards  the described  goal, namely a boundary  stabilizability property for the linearized third--order in time acoustic wave models with degenerated viscoelastic effects and with boundary dissipation located on a suitable portion of the boundary. One of the salient feature is the fact that 
part of the boundary {\it subject to Neumann boundary conditions}  is not observed/dissipated - in line with the configuration expected from applications to  boundary  control. Unobserved Neumann part of the boundary (rather then Dirichlet where suitable  methods have been well  developed)  is known  as causing major challenges in the derivation of observability estimates -- even in the case of the wave equation \cite{LTZ}. This difficulty is dealt with by using suitable geometric and microlocal analysis constructions applicable to the third--order in time models.

\subsection{PDE Model and  Motivation}
We assume that the acoustic pressure $u = u(t,x)$ at the material point $x \in \mathbb{R}^d$ ($d = 2$ or $3$) and instant $t \in \mathbb{R}_+$ obeys the Jordan--Moore--Gibson--Thompson equation 
\begin{equation}\label{modnl} 
	\tau u_{ttt} + (\alpha - 2ku)u_{tt} - c^2 \Delta u - (\delta + \tau c^2)\Delta u_t = 2ku_t^2,
\end{equation} 
where $c, \delta, k > 0$ are constants representing the speed and diffusivity of sound and a nonlinearity parameter, respectively. The function $\alpha: \Omega \to \mathbb{R}$ represents the natural frictional damping provided by the medium. The parameter $\tau >0$ represents the thermal relaxation time and its presence allows for a more precise distinction within propagation of sound in different media. Indeed, the semilinear equation is a (singular perturbation) \textit{refinement} of the classical quasilinear Westervelt's equation ($\tau = 0$): \begin{equation}
	\label{modnl2} (\alpha - 2ku)u_{tt} - c^2 \Delta u - \delta\Delta u_t = 2ku_t^2.
\end{equation} Although not unique, one interesting way of obtaining \eqref{modnl} from a similar procedure as the one to obtain \eqref{modnl2} is simply to use Maxwell--Cattaneo law \cite{ekoue_maxwell-cattaneo_2013,cattaneo_form_1958,cattaneo_sulla_2011} in place of Fourier's law. The advantage of this strategy (which is by no means physics--proof \cite{spigler2020more,christov_heat_2005}) is that it provides a suitable model for studying relaxation effects.Since waves  propagate at a  finite speed, it allows the construction of optimal policies for controlling the HIFU field. Overall, in its simplicity, \eqref{modnl} catches most of the key features that would be present in a more detailed model.

The mathematical study of \eqref{modnl} as well as the differences (and similarities) when compared to \eqref{modnl2} started around 2010 with the works of I. Lasiecka, R. Triggiani and B. Kaltenbacher \cite{kaltenbacher_exponential_2012,kaltenbacher_wellposedness_2011,marchand_abstract_2012} where the issues of wellposedness and stability of solutions under  homogeneous Dirichlet and Neumann boundary data were addressed for both nonlinear and linearized dynamics. The obtained results depend critically on the positivity of the stability parameter 
\begin{equation}\label{gm}
	\gamma(x) \equiv \alpha(x) - \frac{\tau c^2}{b} \geq \gamma_0 > 0 \ \mbox{a.e. in } \Omega.
\end{equation} 
In addition to ensuring uniform exponential decays of solutions for the linearized $(k=0)$ problem, condition \eqref{gm} allows for the construction of nonlinear flows via "barrier's" method. In face of such results, the natural  question is: \textit{what if $\gamma(x)$ is no longer positive?} It is known that if $\gamma < 0 $ one may have chaotic solutions \cite{conejero_chaotic_2015}. If $\gamma \equiv 0$ then the energy is conserved \cite{kaltenbacher_wellposedness_2011,kaltenbacher_wellposedness_2012}. This raises an interesting question  on how to ensure stability of  the dynamics when the frictional  parameter $\gamma $ degenerates  $\gamma(x)  \geq 0 $. 
It has been recently shown that adding viscoelastic effects produces in some cases  the asymptotic decay of the energy, cf. e.g. \cite{lasiecka_mooregibsonthompson_2015,lasiecka_mooregibsonthompson_2016,delloro_fourth-order_2017}. In this work we concentrate on boundary stabilization. This is  also motivated by recent consideration of control problems defined for MGT dynamics \cite{clason,bucci_feedback_2019}. By actuation -- say on the  boundary -- one aims at obtaining a desired outcome measured by certain functional cost. 
It is well known that control problems -- particularly on infinite horizon -- are strongly linked with stabilizability properties of the linearized model. One interesting problem , considered by Clason-Kaltenbacher in  \cite{clason} 
is that of actuating the external part of the boundary through a transducer\footnote{A \textbf{transducer} is a device that takes power from one source and supplies power usually in another form to a second system. In the particular case of HIFU processes, the transducer concentrates the energy generated by the vibration of sound in a given medium and delivers it to a targeted area in form of heat.}  with the aim at targeting acoustic signal on a given area inside the domain, cf. Figure \ref{fig1}. Such configuration will call for stabilizing effects  emanating from uncontrolled part of the boundary, say $\Gamma_1$, while 
the actuation itself will take place on the remaining -- accessible to the user -- part of the boundary, say $\Gamma_0$, which is not subjected to dissipation or absorption. 
This configuration leads to the following model. 

\begin{subnumcases}{}
	\tau u_{ttt} + \alpha(x) u_{tt} - c^2 \Delta u - b\Delta u_t = f & in $Q = (0,T] \times \Omega$ \label{E1-1a} \\[2mm]
	u(0)  = u_0; \quad u_t(0)  = u_1; \quad u_{tt}(0)  = u_2 & in $\Omega$   \label{E1-1b}   \\[2mm]
	\partial_\nu u + \kappa_0(x)u = 0  & in $\Sigma_0 = (0,T] \times \Gamma_0$   \label{E1-1c}  \\[2mm]
	\partial_\nu u + \kappa_1(x)u_t = 0  & in $\Sigma_1 = (0,T] \times \Gamma_1$   \label{E1-1d} 
\end{subnumcases} with $\tau,c,b >0,$ $\alpha \in L^\infty(\Omega)$, $\kappa_0 \in L^\infty(\Gamma_0)$ and $\kappa_1 \in L^\infty(\Gamma_1),$ $\kappa_1(x) \geq \kappa_1 > 0 , \kappa_0 >0$ a.e.

Assuming for the time being that a solution $u = u(t,x)$ for \eqref{E1-1a}--\eqref{E1-1d} exists in a suitable topology, and critical parameter admits degeneracy  $\gamma (x) \geq 0 $,  our goal is to study its asymptotic properties as $t \to \infty$. More precisely, we want to show that for large times the acoustic pressure will be small, i.e., $\lim\limits_{t \to \infty} u(t,\cdot) = 0$, hopefully at exponential rate.

It should be noted that boundary stabilization of linear MGT has been studied recently. However, the existing results \cite{bongarti2020boundary}  and \cite{BLT}  do not allow for un-dissipated $\Gamma_0$ with Neuman-Robin boundary conditions and  degenerate viscoelasticity. The latter provides for major mathematical challenge (even in the case of wave equation). This is due to the fact that boundary conditions on $\Gamma_0$  {\it }fail to satisfy strong Lopatinski conditions. 
On the other hand, control problems under consideration call for $\Gamma_0$ to be an  active (rather than passive) wall  where control actuation takes place. From the mathematical point of view, this new scenario requires drastically different  strategies and constructions. 

This brings us  up to the main topic of this paper:  stabilization problem closely related to optimal control problems for MGT in infinite time horizon and with Neumann boundary  feedback supported only partially on $\Gamma$. In order to motivate our assumptions on the geometry we look at Figure \ref{fig1} where a schematic transducer is represented. The only needed (and realistic) assumption is the convexity of the red part, which we will call $\Gamma_0.$ The other portion of the boundary, $\Gamma_1$,  will be assumed to be "smooth". The schematic representation is given in Figure \ref{fig2}. \begin{figure}[t]
	\centering
	\includegraphics[scale=.25]{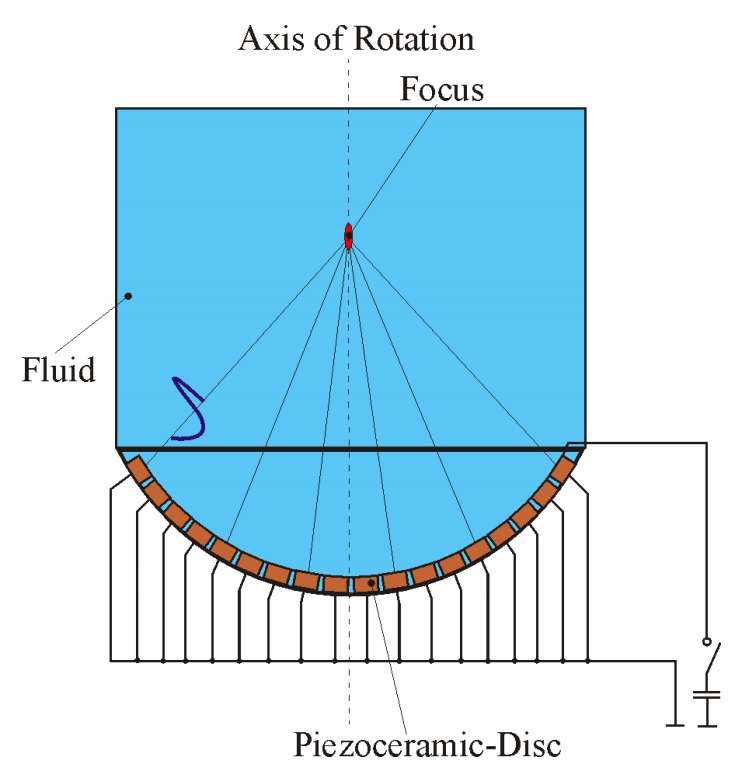}
	\caption{\small Illustration of the domain. The ``red'' convex  portion of the boundary, in the context of HIFU, represents a device called \textit{transducer} and its role is to concentrate the sound waves in the direction of the focus. The remaining part of the boundary represents an absorption area. (Font: B. Kaltenbacher)}
	\label{fig1}
\end{figure}  
\begin{figure}[t]
	\centering
	\includegraphics[scale=.35]{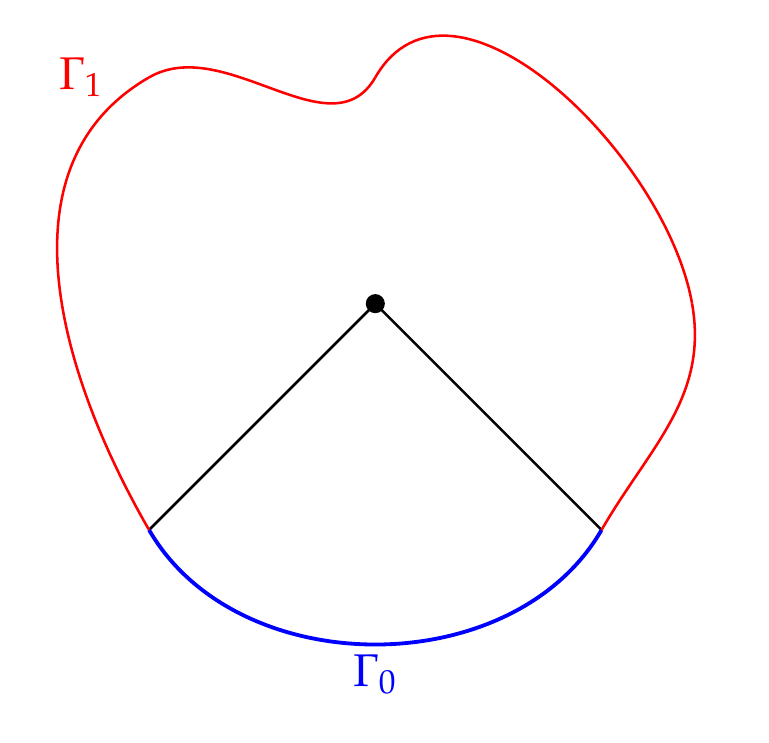}
	\caption{Representation of the domain}
	\label{fig2}
\end{figure}

From the practical point of view, the quantity $\gamma(x)$ is interpreted as the viscoelasticity at the material point $x \in \Omega$ and, in particular in the medical field, is not expected to be known for all points of $\Omega$. By making the more physically relevant assumption that $\gamma \in L^\infty(\Omega)$,  $\gamma(x) \geq 0$ a.e. in $\Omega$ (allowing the critical case $\gamma \equiv 0$, or the case where measurements can only me made at isolated points of the domain), we ask ourselves whether a non--invasive (boundary) action can drive the pressure to zero at large times regardless of the particular knowledge of $\gamma$ (as long as it is nonnegative). This question was answered in \cite{bongarti2020boundary,BLT} with  the final conclusion  that in the case Dirichlet zero boundary conditions are assumed on $\Gamma_0$, which is also star-shaped,  the dissipative boundary effects assumed on $\Gamma_1$ is strong enough to stabilize the system regardless of the particular structure of $\gamma\geq 0$

The present paper addresses the  problem:  what happens when boundary conditions on $\Gamma_0$  are of Neuman-Robin type [Lopatinski condition fails]? This allows to  place actuators on  $\Gamma_1$. Thus  we keep the dissipative Neumann boundary condition on $\Gamma_1$ and supplement $\Gamma_0$ with a  homogeneous Robin boundary data (see \eqref{E1-1c}). Our result states   that uniform stability still holds provided, however, that $\Gamma_0$ is {\it convex} in addition to being star-shaped. If one considers a "benchmark" optimal control problem: \begin{subnumcases}{}
	\min\limits_{g} \ J(u) := \int_0^{\infty}\|\nabla u\|^2_{\Omega} + \|g\|^2_{\Gamma_0} \\\ 
	\mbox{subject to} \ \eqref{E1-1a}, \eqref{E1-1b}, \eqref{E1-1d} \ \mbox{and replacing} \ \eqref{E1-1c} \ \mbox{by} \ \dfrac{\partial u}{\partial \nu}\biggr\rvert_{\Gamma_0} = g
\end{subnumcases} then showing that \eqref{E1-1a}--\eqref{E1-1d} is uniform exponentially stable  proves  a {\it stabilizability} property for the control problem introduced above. Indeed, one takes $ g = -u|_{\gamma_0 } \in L_2(\Gamma_0)$ as a stabilizing feedback. This means   that at least one strategy (control) exists capable of stabilizing the system on infinite horizon. 
We note that related optimal control problem subject to "smooth" controls and finite time horizon has been considered in \cite{clason}. Our point is to address the case of {\it nonsmooth} controls -just  $L^2$ controls- defined on {\it infinite}  time horizon. This leads to new mathematical developments in the area of  boundary stabilizability. 

\section{Main Results} \label{secwpp}
We begin  by introducing a phase (finite energy) space, the abstract version of \eqref{E1-1a}--\eqref{E1-1d} along with some notation. We will work on  a phase space $\mathbb{H}$ given by  \begin{equation}\label{ph-sp}\mathbb{H} := H^1(\Omega) \times H^1(\Omega) \times L^2(\Omega)\end{equation}
To proceed, let  $A: \mathcal{D}(A) \subset L^2(\Omega) \to L^2(\Omega)$ be the operator defined as 
\begin{equation}\label{oplap} 
	A\xi = -\Delta \xi,  \ \ 
	\mathcal{D}(A) = \left\{\xi \in H^2(\Omega); \  \partial_\nu\xi\rvert_{\Gamma_1} =0, \left[\partial_\nu\xi+\kappa_0 \xi\right]_{\Gamma_0}= 0 \right\} 
\end{equation}
In this setting, $A$ is a positive,  self-adjoint operator with compact resolvent and $\mathcal{D}\left(A^{1/2}\right) = H^1(\Omega)$ (equivalent norms). In addition, with some abuse of notation we (also) denote by $A: L^2(\Omega) \to [\mathcal{D}(A)]'$ the extension (by duality) of the operator $A.$

Next, we write \eqref{E1-1a}--\eqref{E1-1d} as a first--order abstract system on $\mathbb{H}$. To this end we need to introduce Harmonic (boundary $\to$ interior) extensions for the Neumann data on $ \Gamma_1$. We proceed as follows: for $\varphi \in L^2(\Gamma_1)$, let $\psi := N(\varphi),$ be the unique solution of the elliptic problem \begin{equation}
	\begin{cases}
		\Delta\psi = 0 \ & \mbox{in} \ \Omega \\ \partial_\nu \psi = \varphi\rvert_{\Gamma_1} \ & \mbox{on} \ \Gamma_1 \\ \partial_\nu\psi +\kappa_0 \psi  = 0 \ & \mbox{on} \ \Gamma_0.
	\end{cases} \label{ep}
\end{equation} It follows from elliptic theory that $N \in \mathcal{L}(H^s(\Gamma),H^{s+3/2}(\Omega))$\footnote{$\mathcal{L}(X,Y)$ denote the space of linear bounded operators from $X$ to $Y$} $(s \in \mathbb{R})$ and 
\begin{equation} \label{neq} 
	N^\ast A \xi = \begin{cases}
		\xi \ \mbox{on} \ &\Gamma_1 \\ 0 \ \mbox{on} \ &\Gamma_0,
\end{cases}
\end{equation} 
for all $\xi \in \mathcal{D}(A)$, where $N^\ast$ represents the adjoint of $N$ when the latter is considered as an operator from $L^2(\Gamma_1) $ to $L^2(\Omega) $. For the reader's convenience, we present a short proof of \eqref{neq} since it will be used critically for the proof of Theorem \ref{t1-1}(i). For $\xi \in \mathcal{D}(A)$ and $\varphi \in L^2(\Gamma_1)$, we first use the definition of adjoint followed by Green's second formula to obtain 
\begin{equation*}
	(N^\ast A \xi,\varphi)_\Gamma = (A\xi,N\varphi) = -(\Delta \xi,N\varphi) = -(\xi,\Delta(N\varphi)) - (\partial_\nu \xi,N\varphi)_\Gamma + (\xi,\partial_\nu N\varphi)_\Gamma.
\end{equation*} 
To complete we use the definition of $N\varphi$ as the solution of problem \eqref{ep} and the fact that $\xi \in \mathcal{D}(A)$. This gives\begin{align}
	\label{a1} (N^\ast A \xi,\varphi)_\Gamma &= -(\xi,\Delta(N\varphi)) - (\partial_\nu \xi,N\varphi)_\Gamma + (\xi,\partial_\nu N\varphi)_\Gamma \nonumber \\ &= (\kappa_0 \xi,N\varphi)_{\Gamma_0} + (\xi,\partial_\nu N\varphi)_{\Gamma_0} + (\xi,\varphi)_{\Gamma_1}\nonumber \\ &= (\xi,\kappa_0N\varphi + \partial \nu N\varphi)_{\Gamma_0} + (\xi,\varphi)_{\Gamma_1} = (\xi,\varphi)_{\Gamma_1},
\end{align} 
which is precisely \eqref{neq}.

Thus, the $u$--problem can be written (distributionally with the values in $[D(A)]' $)  as 
\begin{align}\label{absver}
	\tau u_{ttt} + \alpha(x) u_{tt} + c^2Au + bAu_t + c^2 AN(\kappa_1N^*Au_t) + bAN(\kappa_1N^*Au_{tt}) = f.
\end{align} 

Next, we introduce the operator $\mathscr{A}: \mathcal{D}(\mathscr{A}) \subset \mathbb{H}  \to \mathbb{H}$ with  the action (on $\vec{\xi} = (\xi_1, \xi_2, \xi_3)^\top$):
\begin{equation}\label{opuA}
	\mathscr{A}\vec{\xi} := (\xi_2,\xi_3,-\alpha\xi_3 - c^2A(\xi_1 + N(\kappa_1N^*A\xi_2)) - bA(\xi_2 + N(\kappa_1N^*A\xi_3)))
\end{equation} and  the domain 
\begin{align}
		\label{domA} \mathcal{D}(\mathscr{A}) &= \left\{\vec{\xi} \in \mathbb{H};    \ \xi_3 \in \mathcal{D}\left(A^{1/2}\right), \ \xi_i + N(\kappa_1N^*A\xi_{i+1})\in D(A),~\mbox{for}~i=1,2\right\} \nonumber \\ &= \left\{\vec{\xi} \in \left[H^2(\Omega)\right]^2 \times \mathcal{D} \left(A^{1/2}\right); \left[\dfrac{\partial \xi_1}{\partial \nu} + \kappa_0\xi_1\right] _{\Gamma_0}\!\!\!\!\!= 
		\left[\dfrac{\partial \xi_2}{\partial \nu} + \kappa_0\xi_2\right]_{\Gamma_0}\!\!\!\!\! =0 \right. \nonumber \\ & \hspace{4.8cm}\left. \left[\dfrac{\partial \xi_1}{\partial \nu} + \kappa_1\xi_2\right] _{\Gamma_1}\!\!\!\!\!= 
		\left[\dfrac{\partial \xi_2}{\partial \nu} + \kappa_1\xi_3\right]_{\Gamma_1}\!\!\!\!\! =0
		\right\}.
\end{align}
The first order abstract version of $u$--problem is thus given by
\begin{equation} \begin{cases}
		\label{usist} \Phi_t = \mathscr{A}\Phi + F\\ 
		\Phi(0) = \Phi_0 = (u_0,u_1,u_2)^\top,\end{cases}
\end{equation} 
with $\mathscr{A}:\mathcal{D}(\mathscr{A}) \subset \mathbb{H} \to \mathbb{H}$
defined in (\ref{opuA}) and $F^\top =(0,0,f).$

We are ready for our first result. 
\begin{theorem}{\bf [Wellposedness and Regularity]}\label{t1-1}
	\begin{itemize} 
		\item[\bf (i)] The operator $\mathscr{A}$ generates a strongly continuous semigroup on $\mathbb{H}$.
		\item[\bf (ii)] Let $f \in L^1(0,T,L^2(\Omega))$,  $\kappa_1,\kappa_2 \geq 0 $ a.e and $\alpha \in L^\infty(\Omega)$. Then, for  every initial data $\Phi_0 :=(u_0,u_1,u_2)$ in $\mathbb{H}$,
		there exists a unique (semigroup) solution $\Phi =(u,u_t,u_{tt}) $  of (\ref{E1-1a})-(\ref{E1-1d}) such that 
		$\Phi \in C([0,T], \mathbb{H}) $ for every $T > 0.$ Moreover, if the initial datum belongs to $\mathcal{D}(\mathscr{A})$ and $f \in C^1([0,T],L^2(\Omega))$ the corresponding solution is in $C((0,T]; \mathcal{D}(\mathscr{A})) \cap C^1([0,T], \mathbb{H})$.
	\end{itemize}
\end{theorem}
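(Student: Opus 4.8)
The plan is to derive (i) from the Lumer--Phillips theorem and then obtain (ii) from the standard theory of the abstract inhomogeneous Cauchy problem \eqref{usist}. For (i) I would verify: (a) that $\mathcal{D}(\mathscr{A})$ is dense in $\mathbb{H}$; (b) that, after re-norming $\mathbb{H}$ with an inner product equivalent to that of \eqref{ph-sp} but adapted to the third--order structure, $\mathscr{A}-\omega I$ is dissipative for a suitable $\omega\ge 0$; and (c) that $\lambda I-\mathscr{A}$ maps $\mathcal{D}(\mathscr{A})$ onto $\mathbb{H}$ for some $\lambda>\omega$. Lumer--Phillips applied to $\mathscr{A}-\omega I$ then yields a strongly continuous semigroup $e^{t\mathscr{A}}$ on $\mathbb{H}$ with $\|e^{t\mathscr{A}}\|_{\mathcal{L}(\mathbb{H})}\le Me^{\omega t}$. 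Note that only $\gamma\ge 0$ (see \eqref{gm}) is used, with no lower bound, and $\alpha\in L^\infty(\Omega)$ enters solely as a bounded multiplication operator absorbed into $\omega$.

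For the dissipativity step the standard device, in the spirit of \cite{marchand_abstract_2012,kaltenbacher_wellposedness_2011}, is the substitution $z=u_t+\tfrac{c^2}{b}u$, which recasts \eqref{absver} as the boundary--damped second--order equation $\tau z_{tt}+bA\big(z+N(\kappa_1N^*A z_t)\big)=-\gamma u_{tt}$ (one checks that $z$ inherits the same Neumann/Robin conditions on $\Gamma_1$ and $\Gamma_0$). This suggests giving $\mathbb{H}$ an inner product whose square norm equals, up to fixed constants,
\[
\tau\big\|\xi_3+\tfrac{c^2}{b}\xi_2\big\|_{\Omega}^2+b\big\|A^{1/2}\big(\xi_2+\tfrac{c^2}{b}\xi_1\big)\big\|_{\Omega}^2+c^2\|A^{1/2}\xi_1\|_{\Omega}^2 ,
\]
which is equivalent to the norm of \eqref{ph-sp} since $\mathcal{D}(A^{1/2})=H^1(\Omega)$ with equivalent norms. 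Computing $\langle\mathscr{A}\vec\xi,\vec\xi\rangle$ in this inner product on $\mathcal{D}(\mathscr{A})$ and integrating the principal parts by parts, the trace identity \eqref{neq} turns $\langle AN(\kappa_1N^*A\,\cdot),\cdot\rangle$ into $\int_{\Gamma_1}\kappa_1|\cdot|^2\,d\Gamma$ (the $\Gamma_0$ contribution vanishing by \eqref{neq}); this produces the dissipative boundary term $-b\int_{\Gamma_1}\kappa_1|z_t|^2\,d\Gamma\le 0$, while the sign--indefinite cross terms coming from $-\langle\gamma u_{tt},z_t\rangle$ and from differentiating $\|A^{1/2}\xi_1\|_{\Omega}^2$ are all dominated by $\omega\|\vec\xi\|^2$ via Young's inequality. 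Density of $\mathcal{D}(\mathscr{A})$ in $\mathbb{H}$ is verified directly by exhibiting smooth triples satisfying the four boundary relations in \eqref{domA}.

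The surjectivity of $\lambda I-\mathscr{A}$ is the technical core and the step I expect to be the main obstacle. Given $G=(g_1,g_2,g_3)\in\mathbb{H}$, I would eliminate $\xi_2=\lambda\xi_1-g_1$ and $\xi_3=\lambda^2\xi_1-\lambda g_1-g_2$ in the third component of $(\lambda I-\mathscr{A})\vec\xi=G$, reducing the system to a single elliptic boundary value problem for $\xi_1$ whose weak form on $H^1(\Omega)$ carries the bilinear form $(c^2+b\lambda)(\nabla\xi_1,\nabla\phi)_{\Omega}+((\lambda^2+\lambda\alpha)\xi_1,\phi)_{\Omega}$ together with boundary terms in $\kappa_0$ and $\kappa_1$; this form is bounded and, for $\lambda$ large, coercive on $H^1(\Omega)$, so Lax--Milgram gives a unique $\xi_1$, hence $\vec\xi$. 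The delicate part is to certify that this $\vec\xi$ actually belongs to $\mathcal{D}(\mathscr{A})$ as characterized in \eqref{domA}: one must show that $\xi_1+N(\kappa_1N^*A\xi_2)$ and $\xi_2+N(\kappa_1N^*A\xi_3)$ lie in $\mathcal{D}(A)$ and recover the mixed Neumann/Robin conditions on $\Gamma_0$ and $\Gamma_1$, which requires the requisite elliptic regularity for the mixed problem together with the mapping properties of $N$ and the identity \eqref{neq} --- delicate precisely because the boundary condition on $\Gamma_0$ is of Robin (non-Dirichlet) type and $\kappa_0,\kappa_1,\alpha$ are only $L^\infty$.

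Once (i) is established, (ii) is routine. For $\Phi_0\in\mathbb{H}$ and $f\in L^1(0,T;L^2(\Omega))$ we have $F=(0,0,f)^\top\in L^1(0,T;\mathbb{H})$, and the variation--of--parameters formula $\Phi(t)=e^{t\mathscr{A}}\Phi_0+\int_0^t e^{(t-s)\mathscr{A}}F(s)\,ds$ defines the unique (mild) solution of \eqref{usist} in $C([0,T];\mathbb{H})$ for every $T>0$, uniqueness being immediate from linearity. If moreover $\Phi_0\in\mathcal{D}(\mathscr{A})$ and $f\in C^1([0,T];L^2(\Omega))$, so that $F\in C^1([0,T];\mathbb{H})$, the same formula yields a classical solution, i.e. $\Phi\in C^1([0,T];\mathbb{H})$ with $\Phi(t)\in\mathcal{D}(\mathscr{A})$ and $\Phi_t=\mathscr{A}\Phi+F$; in particular $\Phi\in C((0,T];\mathcal{D}(\mathscr{A}))\cap C^1([0,T];\mathbb{H})$. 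Reading $\Phi=(u,u_t,u_{tt})^\top$ then gives the asserted solution of \eqref{E1-1a}--\eqref{E1-1d}.
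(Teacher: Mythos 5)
Your overall architecture (renorming adapted to $z=u_t+\tfrac{c^2}{b}u$, Lumer--Phillips, then mild/classical solutions for part (ii)) is in the right spirit, and part (ii) as you present it is indeed routine once (i) holds. The genuine gap is in step (c), the surjectivity of $\lambda I-\mathscr{A}$ onto $\mathbb{H}$ with the domain \eqref{domA}, and it is not the elliptic-regularity issue you flag: it is an algebraic obstruction coming from the data. The resolvent relations force $\xi_2=\lambda\xi_1-g_1$ and $\xi_3=\lambda\xi_2-g_2$, while \eqref{domA} demands the boundary relations \emph{separately} for $\xi_1$ and $\xi_2$; subtracting, membership of both in the domain would require $\partial_\nu g_1+\kappa_0 g_1=0$ on $\Gamma_0$ (and the analogous relation on $\Gamma_1$), which a generic $g_1\in H^1(\Omega)$ does not satisfy. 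Equivalently, at the abstract level the third resolvent equation only delivers the \emph{combination} $c^2\bigl(\xi_1+N(\kappa_1N^*A\xi_2)\bigr)+b\bigl(\xi_2+N(\kappa_1N^*A\xi_3)\bigr)\in\mathcal{D}(A)$, and since $\xi_2+N(\kappa_1N^*A\xi_3)=\lambda\bigl(\xi_1+N(\kappa_1N^*A\xi_2)\bigr)-\bigl(g_1+N(\kappa_1N^*Ag_2)\bigr)$ with $g_1+N(\kappa_1N^*Ag_2)\notin\mathcal{D}(A)$ in general, the two memberships required by \eqref{domA} cannot both be extracted. So your Lax--Milgram solution (whose zero-order coefficient, incidentally, should be $\lambda^3+\lambda^2\alpha$ rather than $\lambda^2+\lambda\alpha$) cannot be placed in $\mathcal{D}(\mathscr{A})$ for arbitrary $G\in\mathbb{H}$, and no amount of elliptic regularity for the mixed Robin problem repairs this; range density is all you get, so at best you would generate with the closure of $\mathscr{A}$, which is not what you set out to prove.

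The paper circumvents exactly this point. It conjugates by the isomorphism $M$ (the $z$-substitution), so that the transformed generator $\mathbb{A}=M\mathscr{A}M^{-1}$ has the domain \eqref{domz}, which involves only the single combination $\xi_2+N(\kappa_1N^*A\xi_3)\in\mathcal{D}(A)$; it then splits $\mathbb{A}=\mathbb{A}_d+P$ with $P$ bounded, proves exact dissipativity of $\mathbb{A}_d$ and maximality by reducing the resolvent equation to invertibility of the strictly positive operator $K_\lambda=(\lambda^2+\lambda)A^{-1}+b\bigl(I+\lambda N(\kappa_1N^*A)\bigr)$ on $\mathcal{D}(A^{1/2})$ (using \eqref{neq}), with domain membership read off directly from the resolvent identity rather than from elliptic regularity. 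The semigroup for the $u$-problem is then obtained by similarity, and the individual boundary conditions for $u$ on $\Gamma_0$ and $\Gamma_1$ are recovered for strong solutions by the ODE argument of Lemma \ref{equiv_prob} under the compatibility conditions \eqref{comp}. If you want to keep your direct route, you must either enlarge the domain to the one described by the combined condition (i.e.\ work with the closure) or adopt the paper's transformation; as written, step (c) fails.
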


Our main result pertains to exponential decay of solutions asserted by Theorem \ref{t1-1} and requires geometric assumptions on the undissipated part of the boundary $\Gamma_0$. 
We assume that $\Gamma_0$ is convex, in the sense of being described by  the level set of a convex function. In addition, we require that the following "star shaped" condition holds: 
\begin{equation}\label{star}
	(x-x_0)\cdot \nu \leq 0
\end{equation} on $\Gamma_0$ for some $x_0 \in \mathbb{R}^n.$
\begin{theorem}{\bf [Uniform stability]} \label{thm34n}
	Let $\gamma (x) \geq 0$ and the geometric condition stated above holds true. The semigroup generated by $\mathscr{A} $ is exponentially stable, i.e., there exist constants $M \geq 1, \omega > 0 $ such that 
	$$\|\Phi(t)\|_{\mathbb{H}} \leq M e^{-\omega t} \|\Phi_0\|_{\mathbb{H}}$$ 
	for all $\Phi_0 \in \mathbb{H} $.
\end{theorem}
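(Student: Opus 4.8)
The plan is to reduce \eqref{E1-1a}--\eqref{E1-1d} to a boundary‑damped wave equation coupled to an exponentially stable scalar transport equation, and then prove exponential decay of the wave energy by a Rellich--Pohozaev multiplier combined with sharp trace estimates; the only genuine obstacle will be the undissipated Robin boundary $\Gamma_0$, where the Lopatinski condition fails. For the reduction, set $w := u_t + \tfrac{c^2}{b}u$. A direct computation gives $-c^2\Delta u - b\Delta u_t = -b\Delta w$ and $u_{tt} = w_t - \tfrac{c^2}{b}w + \tfrac{c^4}{b^2}u$, so that \eqref{E1-1a} with $f\equiv 0$ becomes $\tau w_{tt} + \gamma(x) w_t - b\Delta w = \tfrac{\gamma c^2}{b}w - \tfrac{\gamma c^4}{b^2}u$ in $Q$, subject to $\partial_\nu w + \kappa_1 w_t = 0$ on $\Sigma_1$, $\partial_\nu w + \kappa_0 w = 0$ on $\Sigma_0$, coupled to the ODE $u_t + \tfrac{c^2}{b}u = w$, $u(0)=u_0$. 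Because $\mathcal D(A^{1/2}) = H^1(\Omega)$ with equivalent norms (Theorem \ref{t1-1}), the wave energy $E_w(t) := \tfrac\tau2\|w_t\|^2 + \tfrac b2\big(\|\nabla w\|^2 + \int_{\Gamma_0}\kappa_0 w^2\big)$ is equivalent to $\|w\|_{H^1}^2 + \|w_t\|^2$, and $\Phi \mapsto (u,w,w_t)$ is a bounded isomorphism of $\mathbb{H}$, whence $\|\Phi(t)\|_{\mathbb{H}}^2 \simeq E_w(t) + \|u(t)\|_{H^1}^2$. The ODE is exponentially stable as an input--output map, $\|u(t)\|_{H^1} \lesssim e^{-c^2 t/b}\|u_0\|_{H^1} + \int_0^t e^{-c^2(t-s)/b}\|w(s)\|_{H^1}\,ds$, so it suffices to prove exponential decay of $E_w$, the $u$‑coupling being carried along and absorbed. (Note that when $\gamma\equiv0$ the original system has ``ghost'' solutions $u = e^{-c^2t/b}\phi$, $w\equiv0$; these rule out a naive observability inequality for $\|\Phi\|_{\mathbb{H}}$ but are harmless here, carrying no $w$‑energy and decaying through the ODE.)

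Multiplying the $w$‑equation by $w_t$ and integrating by parts: the Robin term on $\Gamma_0$ cancels, $\Gamma_1$ yields $-b\int_{\Gamma_1}\kappa_1 w_t^2$, the interior term yields $-\|\sqrt\gamma w_t\|^2$, and the right‑hand side is absorbed (splitting the coefficient as $\sqrt\gamma\cdot\sqrt\gamma$, using $\gamma\in L^\infty$, $\gamma\ge0$): $\tfrac{d}{dt}E_w \le -\tfrac12\|\sqrt\gamma w_t\|^2 - c_1\|w_t\|_{L^2(\Gamma_1)}^2 + C(\|w\|^2 + \|u\|^2)$, $c_1>0$. Writing $\mathcal D(t) := \|\sqrt\gamma w_t\|^2 + \|w_t\|_{L^2(\Gamma_1)}^2$, I would next establish the observability‑type bound $\int_0^T E_w\,dt \le C\int_0^T(\mathcal D + \|u\|_{H^1}^2)\,dt$ for $T$ large. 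The tool is the Rellich--Pohozaev identity for the $w$‑equation with multiplier $q := (x-x_0)\cdot\nabla w + \tfrac{n-1}{2}w$, $x_0$ as in \eqref{star}, integrated over $Q$: it gives $\int_0^T E_w \le C(\text{boundary integrals on }\Sigma_0\cup\Sigma_1 + \text{l.o.t.})$, where ``l.o.t.'' collects $\int_0^T\|u\|_{H^1}^2$, $\int_0^T\|w\|^2$, $\int_{\Sigma_0}w^2$. On $\Sigma_1$, using $\partial_\nu w = -\kappa_1 w_t$, the boundary integrals reduce to $\mathcal D$ once the tangential trace $\|\nabla_{\mathrm{tan}}w\|_{L^2(\Sigma_1)}$ is controlled by $\varepsilon\int_0^T E_w + C_\varepsilon(\int_0^T\mathcal D + \text{l.o.t.})$ via the sharp boundary (hidden) regularity theory for second‑order hyperbolic equations with Neumann data --- the ``sharp trace estimates'' to be developed --- after which $\varepsilon$ is chosen small.

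The crux is $\Sigma_0$. Here $\partial_\nu w = -\kappa_0 w$ is lower order; the velocity term $\tfrac12\int_{\Sigma_0}\big((x-x_0)\cdot\nu\big)w_t^2$ has the favorable sign $\le 0$ by \eqref{star} and is simply discarded --- precisely the role of the star‑shaped condition, since $w_t$ is uncontrolled on $\Gamma_0$ --- but the residual tangential term $\tfrac12\int_{\Sigma_0}|(x-x_0)\cdot\nu|\,|\nabla_{\mathrm{tan}}w|^2 \ge 0$ is a derivative along the uncontrolled boundary, dominated by nothing in $\mathcal D$; this is the obstruction caused by the failure of the strong Lopatinski condition. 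It is removed by exploiting the convexity of $\Gamma_0$: either via a tangential integration‑by‑parts on the hypersurface $\Gamma_0$ (using the Robin relation) that recasts this term, modulo l.o.t.\ and already‑controlled traces, against the second fundamental form of $\Gamma_0$, which is sign‑definite there; or via a microlocal decomposition near $\Gamma_0$ in which the elliptic and glancing regions are absorbed by the trace estimates while the convex geometry disposes of the hyperbolic region. This yields $\int_0^T E_w \le C\int_0^T\mathcal D + C\int_0^T(\|w\|^2 + \|u\|_{H^1}^2) + C\int_{\Sigma_0}w^2$. The terms $\int_0^T\|w\|^2$ and $\int_{\Sigma_0}w^2$ are compact relative to the energy (Aubin--Lions in $t$; compactness of the trace $H^1(\Omega)\to L^2(\Gamma_0)$), so a compactness--uniqueness argument upgrades this to $\int_0^T E_w \le C\int_0^T(\mathcal D + \|u\|_{H^1}^2)$: a normalized sequence with $\int_0^T\mathcal D + \int_0^T\|u^n\|_{H^1}^2 \to 0$ has a limit $(w^\infty,u^\infty)$ with $u^\infty = 0$, and the coupling ODE $u^\infty_t + \tfrac{c^2}{b}u^\infty = w^\infty$ then forces $w^\infty = 0$, contradicting that $\int_0^T\|w^n\|^2 + \int_{\Sigma_0}|w^n|^2$ stays bounded below. (No wave unique‑continuation is needed: the transport ODE supplies the uniqueness, and ghosts, having $u^\infty\ne0$, are excluded from this limiting scenario.)

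Finally, combining $\int_0^T E_w\,dt \le C\int_0^T(\mathcal D + \|u\|_{H^1}^2)\,dt$ with the integrated dissipation inequality --- using $\int_0^T\|w\|^2 \lesssim \int_0^T E_w$ together with a standard perturbed‑energy/iteration argument over windows $[nT,(n+1)T]$ --- gives exponential decay of $E_w$, and then the ODE representation of $u$ from Step 1 transfers it to $\|u(t)\|_{H^1}^2$; hence $\|\Phi(t)\|_{\mathbb{H}} \le M e^{-\omega t}\|\Phi_0\|_{\mathbb{H}}$, first for $\Phi_0\in\mathcal D(\mathscr A)$ and then for all $\Phi_0\in\mathbb{H}$ by density and the a priori bound. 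The main obstacle throughout is the $\Sigma_0$ tangential term $\int_{\Sigma_0}|(x-x_0)\cdot\nu|\,|\nabla_{\mathrm{tan}}w|^2$: it is the price of placing actuators on $\Gamma_1$ while leaving $\Gamma_0$ under Neumann/Robin (non‑Lopatinski) conditions, and controlling it is exactly what forces the combined convexity‑plus‑star‑shapedness hypothesis on $\Gamma_0$ and the sharp trace / microlocal machinery.
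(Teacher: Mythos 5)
Your reduction via $w=u_t+\tfrac{c^2}{b}u$ is exactly the paper's $z$-variable, and you correctly locate the two difficulties (the coupling through $\gamma$ and the undissipated Neumann--Robin boundary $\Gamma_0$), but the way you handle each contains a genuine gap. First, the energy bookkeeping: by expanding $-\gamma u_{tt}=-\gamma w_t+\tfrac{\gamma c^2}{b}w-\tfrac{\gamma c^4}{b^2}u$ and Peter--Pauling the last two terms against $\sqrt{\gamma}\,w_t$, you are left with a non-dissipated remainder $\tfrac{c^4}{2b^2}\|\sqrt{\gamma}\,u_t\|^2$ (note $\tfrac{c^2}{b}w-\tfrac{c^4}{b^2}u=\tfrac{c^2}{b}u_t$), so $E_w$ is not even quasi-monotone and the final window iteration has nothing to run on. The paper avoids this by augmenting the energy with $\tfrac{c^2}{2b}\|\gamma^{1/2}u_t\|_2^2$ (see \eqref{E1}), which yields the \emph{exact} identity \eqref{e1id} with dissipation $\int_{\Sigma_1}\kappa_1 z_t^2+\int_Q\gamma u_{tt}^2$ and no remainder. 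More seriously, your target inequality $\int_0^T E_w\le C\int_0^T(\mathcal{D}+\|u\|_{H^1}^2)$ is not a stabilization estimate: $\int_0^T\|u\|_{H^1}^2$ is not dissipated, and the proposed absorption through the ODE is circular, since the ODE only gives $\int_0^T\|u\|_{H^1}^2\lesssim\|u_0\|_{H^1}^2+\int_0^T\|w\|_{H^1}^2$ with a constant that is not small relative to $\int_0^T E_w$. Nor can your compactness--uniqueness delete that term: your uniqueness step hinges on $u^\infty=0$, which you know only because $\|u\|_{H^1}^2$ was placed in the observation in the first place. In the paper the coupling term in the multiplier identities is estimated directly against the matching dissipation $\int_Q\gamma u_{tt}^2$, only the genuinely lower-order terms $lot_\delta(z)$ are removed by compactness--uniqueness (Proposition \ref{compuniq}), and that uniqueness step \emph{does} use over-determined Cauchy data on $\Gamma_1$ for the wave operator --- your claim that no wave unique continuation is needed is an artifact of the flawed structure. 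Exponential decay of $E_1$ is then obtained first, and only afterwards the ODE transfers it to $\|u\|_{\mathcal{D}(A^{1/2})}$.

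Second, the crux on $\Gamma_0$: you keep the radial multiplier $(x-x_0)\cdot\nabla w$ and then propose to dispose of the resulting tangential term $\int_{\Sigma_0}|(x-x_0)\cdot\nu|\,|\nabla_{\mathrm{tan}}w|^2$ either by an integration by parts against the second fundamental form or by a microlocal argument in the hyperbolic region; both are asserted, not proved, and neither is the mechanism that is known to work here. The paper's device is to replace the radial field by a special $C^1$ vector field $h$ --- obtained by bending the radial field tangentially near $\Gamma_0$, which is precisely where convexity of $\Gamma_0$ together with the star-shaped condition \eqref{star} enters (constructions of \cite{LTZ,LL_1999_CC}) --- satisfying $h\cdot\nu=0$ on $\Gamma_0$ and $J(h)\ge c_0>0$ in $\Omega$. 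With this field the offending boundary term $\tfrac12\int_{\Sigma_0}\left(z_t^2-b|\nabla z|^2\right)(h\cdot\nu)$ vanishes identically, and the only surviving $\Gamma_0$ contribution, $\int\partial_\nu z\,\partial_\tau z\,(h\cdot\tau)$, is lower order thanks to the Robin condition $\partial_\nu z=-\kappa_0 z$ and the trace/interpolation estimate \eqref{I23}; the $\Gamma_1$ tangential trace is then handled, as you anticipated, by the sharp (microlocal) trace estimate \eqref{tang1}. Without such a vector-field construction, or an actual proof of your microlocal claim, the $\Sigma_0$ tangential energy remains uncontrolled and the observability step \eqref{e1rec} fails.
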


\begin{remark} 
	We note critical role being played by the fact that boundary conditions  imposed are of Neumann type and there is nontrivial part of the boundary $\Gamma_0$ which is not dissipated. It is known from the  observability theory for wave equation, that standard techniques  do not apply to uncontrolled Neumann parts of the boundary-\cite{LTZ,LL_2002_NA} . The reason is that known multipliers do not collect the energy from  this part of the boundary due to conflicting sign  of the vector field on the undissipated part of the boundary. Recently, new geometric methods have been introduced in order to handle this difficulty. We shall adapt these methods to the present system. 
\end{remark}

{\bf Conclusion.} The result of Theorem \ref{thm34n} provides a positive answer to the question of exponential stabilizability of MGT equation in the  critical case ($\gamma \equiv 0$) via a  boundary  feedback supported only partially  on $\Gamma$ with the requirement of convexity imposed on $\Gamma_0$. 

The remainder of this paper is devoted to the proofs of  our  main results.

\section{Semigroup Generation}

We recall that, topologically, the space $\mathbb{H}$ introduced in the previous section is equivalent to 
\begin{equation}\label{lowen}
	\mathcal{D}\left(A^{1/2}\right) \times \mathcal{D}\left(A^{1/2}\right) \times L^2(\Omega)\end{equation} with the topology induced by the inner product defined, for all $\vec{\xi} = (\xi_1,\xi_2,\xi_3)^\top, \vec{\varphi} = (\varphi_1,\varphi_2,\varphi_3)^\top \in \mathbb{H}$, as
\begin{equation}
	\label{inH} \left(\vec{\xi},\vec{\varphi}\right)_{\mathbb{H}} = (A^{1/2} \xi_1,A^{1/2} \varphi_1) + b(A^{1/2}\xi_2,A^{1/2} \varphi_2) +
	(\xi_3,\varphi_3).
\end{equation}  Because of this equivalence we will be using the same $\mathbb{H}$ to denote both spaces.
It is useful to notice that 
\begin{equation}\label{norm}
(A^{1/2}u,A^{1/2}v) = (\nabla u, \nabla v) + \kappa_0 \int_{\Gamma_0} u v d\Gamma_0 
\end{equation}

We need to show that $\mathscr{A}: \mathcal{D}(\mathscr{A}) \subset \mathbb{H} \to \mathbb{H}$ generates a strongly continuous semigroup on $\mathbb{H}.$ It is convenient to introduce the following  change of variables $bz = bu_t + c^2 u$ (see \cite{marchand_abstract_2012}) which reduces the problem to a PDE--abstract ODE coupled system.

Let $M \in \mathcal{L}(\mathbb{H})$ defined by $$M\vec{\xi} = \left(\xi_1,\xi_2 + \dfrac{c^2}{b}\xi_1, \xi_3 + \dfrac{c^2}{b}\xi_2\right)$$ which has inverse $M^{-1}\in \mathcal{L}(\mathbb{H})$ given by $$M^{-1}\vec{\xi} = \left(\xi_1,\xi_2-\dfrac{c^2}{b}\xi_1,\xi_3 - \dfrac{c^2}{b}\xi_2 + \dfrac{c^4}{b^2}\xi_1\right)$$ and therefore is an isomorphism of $\mathbb{H}$. The next lemma makes precise the translation of $u$--problem to a different  system involving a component of  a suitable wave equation labeled by  $z.$

\begin{lemma} \label{equiv_prob}  
	Assume that the compatibility conditions 
	\begin{equation}\label{comp} 
		\dfrac{\partial}{\partial \nu} u_0 + \kappa_0 u_0 = 0 \ \emph{\mbox{on}} \ \Gamma_0, \qquad \dfrac{\partial}{\partial \nu} u_0 + \kappa_1 u_1 = 0 \ \emph{\mbox{on}} \ \Gamma_1
	\end{equation} hold.
	Then $\Phi \in C^1(0,T;\mathbb{H})\cap C(0,T;\mathcal{D}(\mathscr{A}))$ is a strong solution for \eqref{usist} if, and only if, $\Psi= M\Phi \in C^1(0,T;\mathbb{H})\cap C(0,T;\mathcal{D}(\mathbb{A}))$ is a strong solution for  
	\begin{equation} 
		\begin{cases}
			\label{usistz} \Psi_t = \mathbb{A}\Psi + G\\ 
			\Psi(0) = \Psi_0 = M\Phi_0 =  \left(u_0,u_1 + \dfrac{c^2}{b}u_0,u_2 + \dfrac{c^2}{b}u_1\right)^\top,
		\end{cases}
	\end{equation} 
	where $G = MF$ and $\mathbb{A} = M\mathscr{A}M^{-1}$ with 
	\begin{equation}\label{domz}
		\begin{split}
		\mathcal{D}(\mathbb{A}) = \left\{\vec{\xi} \in \left[H^2(\Omega)\right]^2 \times \mathcal{D} \left(A^{1/2}\ \right);
		\left[\dfrac{\partial \xi_2}{\partial \nu} + \kappa_0\xi_2\right]_{\Gamma_0} =0, \right. \\ \left. \left[\dfrac{\partial \xi_2}{\partial \nu} + \kappa_1\xi_3\right]_{\Gamma_1} =0
		\right\}
		\end{split}
	\end{equation}
\end{lemma}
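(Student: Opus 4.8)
The plan is to treat $\Psi=M\Phi$ as nothing more than a bounded, time--independent change of unknown and to transport everything through the isomorphism $M$. Since $M,M^{-1}\in\mathcal{L}(\mathbb{H})$ do not depend on $t$, the map $\Phi\mapsto M\Phi$ is a linear homeomorphism of $C([0,T];\mathbb{H})$ and of $C^{1}([0,T];\mathbb{H})$ onto themselves that commutes with $\partial_{t}$; hence, if $\Phi$ solves \eqref{usist}, then $\Psi:=M\Phi$ satisfies $\Psi_{t}=M\Phi_{t}=M\mathscr{A}M^{-1}\Psi+MF$, which is \eqref{usistz} with $\mathbb{A}:=M\mathscr{A}M^{-1}$ and $G:=MF$, and conversely $\Phi=M^{-1}\Psi$. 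The whole equivalence therefore reduces to two things: (i) identifying $\mathbb{A}=M\mathscr{A}M^{-1}$, with domain \eqref{domz}, as the first--order form of the coupled ODE/wave ``$z$--system'' obtained from \eqref{E1-1a} through the substitution $bz=bu_{t}+c^{2}u$; and (ii) showing that, under the compatibility conditions \eqref{comp}, one has $\Psi(t)\in\mathcal{D}(\mathbb{A})$ for all $t$ precisely when $\Phi(t)\in\mathcal{D}(\mathscr{A})$ for all $t$.

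For (i) I would run the standard MGT manipulation of \cite{marchand_abstract_2012}: from $bz=bu_{t}+c^{2}u$ one has $bz_{tt}=bu_{ttt}+c^{2}u_{tt}$ and $bAz=c^{2}Au+bAu_{t}$, and inserting these into \eqref{E1-1a} to eliminate $u_{ttt}$, $Au$, $Au_{t}$ produces the pair $u_{t}+\tfrac{c^{2}}{b}u=z$ and $\tau z_{tt}+\gamma\,u_{tt}+bAz=f$ (with $\gamma$ as in \eqref{gm}), i.e.\ exactly $\Psi_{t}=\mathbb{A}\Psi+G$ written out component by component. For (ii) I would plug $\vec{\eta}=M\vec{\xi}$ into the four boundary relations defining $\mathcal{D}(\mathscr{A})$ in \eqref{domA}: a short computation shows that the two $\Gamma_{0}$--relations become $[\partial_{\nu}\eta_{1}+\kappa_{0}\eta_{1}]_{\Gamma_{0}}=[\partial_{\nu}\eta_{2}+\kappa_{0}\eta_{2}]_{\Gamma_{0}}=0$ and the two $\Gamma_{1}$--relations become $[\partial_{\nu}\eta_{2}+\kappa_{1}\eta_{3}]_{\Gamma_{1}}=0$ together with one extra scalar relation involving $\eta_{1}$ alone. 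Thus $M(\mathcal{D}(\mathscr{A}))$ is exactly the subset of the set \eqref{domz} (which carries precisely the two conditions on the $\xi_{2}$--row) cut out by two further trace constraints on the first component, and, evaluated at $\Psi_{0}=M\Phi_{0}=(u_{0},\,u_{1}+\tfrac{c^{2}}{b}u_{0},\,\cdot\,)$, those two constraints are exactly \eqref{comp}.

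It then remains to verify that these two extra constraints propagate along the flow of $\mathbb{A}$, which is the step I expect to demand the most care since no interior smoothing is available. Given a strong solution $\Psi=(u,z,z_{t})$ of \eqref{usistz}, the boundary traces $q(t):=\left.[\partial_{\nu}u+\kappa_{0}u]\right|_{\Gamma_{0}}$ and $r(t):=\left.[\partial_{\nu}u+\kappa_{1}u_{t}]\right|_{\Gamma_{1}}$ are well defined ($u\in H^{2}(\Omega)$ and $z_{t}\in\mathcal{D}(A^{1/2})$ along the trajectory) and, using $u_{t}=z-\tfrac{c^{2}}{b}u$, $u_{tt}=z_{t}-\tfrac{c^{2}}{b}u_{t}$ and the domain relations $\left.[\partial_{\nu}z+\kappa_{0}z]\right|_{\Gamma_{0}}=\left.[\partial_{\nu}z+\kappa_{1}z_{t}]\right|_{\Gamma_{1}}=0$, one finds that $q$ and $r$ solve the homogeneous scalar ODEs $q_{t}+\tfrac{c^{2}}{b}q=0$ and $r_{t}+\tfrac{c^{2}}{b}r=0$; since \eqref{comp} forces $q(0)=r(0)=0$, it follows that $q\equiv r\equiv0$, i.e.\ the boundary conditions \eqref{E1-1c}--\eqref{E1-1d} hold for all $t$ and $\Phi(t)=M^{-1}\Psi(t)\in\mathcal{D}(\mathscr{A})$. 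The reverse implication is the same computation read backwards (differentiating \eqref{E1-1c}--\eqref{E1-1d} in $t$ to produce the $z$--conditions on $\Gamma_{0}$ and $\Gamma_{1}$). Apart from this boundary bookkeeping the only points left to check are routine: that the normal--trace maps commute with $\partial_{t}$ on strong solutions, that $q,r\in C^{1}([0,T];L^{2}(\Gamma_{i}))$, and that $M$ maps $\mathcal{D}(\mathscr{A})$ into $\mathcal{D}(\mathbb{A})$ and agrees there with $M\mathscr{A}M^{-1}$ — everything else is the isomorphism property of $M$ and elementary algebra.
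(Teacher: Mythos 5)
Your proposal is correct and follows essentially the same route as the paper: the transformation by the time--independent isomorphism $M$ is treated as routine algebra, and the only substantive step is the boundary--condition matching, which you handle exactly as the paper does by observing that the traces $q(t)=\left.[\partial_\nu u+\kappa_0 u]\right|_{\Gamma_0}$ and $r(t)=\left.[\partial_\nu u+\kappa_1 u_t]\right|_{\Gamma_1}$ satisfy the homogeneous ODE $b\,\partial_t(\cdot)+c^2(\cdot)=0$ and vanish at $t=0$ by \eqref{comp}, hence vanish identically (the paper's $\Upsilon$ argument, applied \emph{mutatis mutandis} on $\Gamma_1$). Your additional bookkeeping on identifying $\mathbb{A}=M\mathscr{A}M^{-1}$ and on how $M(\mathcal{D}(\mathscr{A}))$ sits inside \eqref{domz} is consistent with, and merely elaborates, what the paper dismisses as the trivial part.
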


\begin{proof}
	The only non-trivial step is to prove that boundary conditions (going from $z$ to $u$--problem) match. To this end, assume that $\Psi = (u,z,z_t) \in C^1(0,T;\mathbb{H}) \cap C(0,T;\mathcal{D}(\mathbb{A}))$ is a strong solution for \eqref{usistz}. Let 
	$$\Upsilon(t) := \left(\dfrac{\partial u(t)}{\partial \nu} + \kappa_0 u(t)\right)\biggr\rvert_{\Gamma_0}, \ t \geq 0$$ 
	and notice that $b\Upsilon_t +c^2\Upsilon = 0$ for all $t$. This along with the compatibility condition \eqref{comp}$_1$ ($\Upsilon(0) = 0$) implies that $\Upsilon \equiv 0$. The same argument \emph{mutatis mutandis} recovers the boundary condition for $u$ on $\Gamma_1.$ The proof is then complete.
\end{proof}

For $\vec{\xi} = (\xi_1,\xi_2,\xi_3)^\top \in \mathcal{D}(\mathbb{A})$ a basic algebraic computation yields the explicit formula for $\mathbb{A}.$
\begin{align}
	\mathbb{A}\vec{\xi} = \left(\xi_2-\dfrac{c^2}{b}\xi_1,\xi_3,-\gamma\left(\xi_3 - \dfrac{c^2}{b}\xi_2 + \dfrac{c^4}{b^2}\xi_1\right) - bA\xi_2 -b\kappa_1ANN^\ast A\xi_3 \right)
\end{align} 
where $\gamma = \alpha - \dfrac{c^2}{b} \in L^\infty(\Omega).$

We are ready for our generation result.

\begin{theorem}\label{gener}
	The operator $\mathscr{A}$ generates a strongly continuous semigroup on $\mathbb{H}$.
\end{theorem}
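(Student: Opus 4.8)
I would prove generation for the transformed operator $\mathbb{A}=M\mathscr{A}M^{-1}$ and then transfer the conclusion back to $\mathscr{A}$. Since $M$ is an isomorphism of $\mathbb{H}$ (Lemma~\ref{equiv_prob}), $\mathscr{A}$ generates a $C_0$-semigroup on $\mathbb{H}$ if and only if $\mathbb{A}$ does, the two semigroups being related by $e^{t\mathscr{A}}=M^{-1}e^{t\mathbb{A}}M$. To show $\mathbb{A}$ is a generator I would apply the Lumer--Phillips theorem in its quasi-contraction form: it suffices to verify that (a) $\mathcal{D}(\mathbb{A})$ is dense in $\mathbb{H}$, (b) there is $\omega\in\mathbb{R}$ such that $\mathbb{A}-\omega I$ is dissipative, and (c) $\mathrm{Range}(\lambda_0 I-\mathbb{A})=\mathbb{H}$ for some $\lambda_0>\omega$. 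Density of $\mathcal{D}(\mathbb{A})$ is routine: given $(\varphi_1,\varphi_2,\varphi_3)\in\mathbb{H}$, approximate $\varphi_3$ by $H^1$-functions, lift the conormal compatibility conditions appearing in \eqref{domz} by a bounded right inverse of the trace map, and approximate $\varphi_1,\varphi_2$ by $H^2$-functions.

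\textbf{Quasi-dissipativity.} Here one computes $\mathrm{Re}(\mathbb{A}\vec{\xi},\vec{\xi})_{\mathbb{H}}$ using the inner product \eqref{inH}. The two ``wave'' cross terms produced by $b(A^{1/2}\xi_3,A^{1/2}\xi_2)$ and by $-b(A\xi_2,\xi_3)$ cancel identically. The genuinely dissipative contribution is the boundary term coming from $-b\kappa_1 ANN^{*}A\xi_3$, which by \eqref{neq} (extended by density, so that $N^{*}A$ acts as the trace onto $\Gamma_1$) equals $-b\int_{\Gamma_1}\kappa_1|\xi_3|^{2}\,d\Gamma_1\le 0$ because $\kappa_1\ge 0$. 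The remaining terms, namely the coupling $(A^{1/2}\xi_2,A^{1/2}\xi_1)$, the term $-\tfrac{c^{2}}{b}\|A^{1/2}\xi_1\|^{2}\le 0$, and the zeroth-order term $-(\gamma(\xi_3-\tfrac{c^{2}}{b}\xi_2+\tfrac{c^{4}}{b^{2}}\xi_1),\xi_3)$, are all bounded by $C\|\vec{\xi}\|_{\mathbb{H}}^{2}$ using only $\gamma,\alpha\in L^{\infty}(\Omega)$. Hence $\mathrm{Re}(\mathbb{A}\vec{\xi},\vec{\xi})_{\mathbb{H}}\le\omega\|\vec{\xi}\|_{\mathbb{H}}^{2}$ for a suitable $\omega$, i.e.\ $\mathbb{A}-\omega I$ is dissipative.

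\textbf{Range condition.} Fix $\vec{f}=(f_1,f_2,f_3)\in\mathbb{H}$ and $\lambda>\omega$, and solve $(\lambda I-\mathbb{A})\vec{\xi}=\vec{f}$ componentwise. The first two equations give $\xi_2=(\lambda+\tfrac{c^{2}}{b})\xi_1-f_1$ and $\xi_3=\lambda\xi_2-f_2$, so the third equation reduces to a single elliptic problem for $\xi_1$, schematically $b(\lambda+\tfrac{c^{2}}{b})A\xi_1+(\text{lower order in }\xi_1)+(\text{boundary operators})=(\text{data})$. I would recast it weakly on $H^{1}(\Omega)=\mathcal{D}(A^{1/2})$: testing against $\eta\in H^{1}(\Omega)$, using \eqref{norm} for the $A^{1/2}$-form (which already carries the $\kappa_0$ boundary integral on $\Gamma_0$) and using \eqref{neq} to rewrite $(ANN^{*}A\xi_3,\eta)$ as the boundary integral $\int_{\Gamma_1}(\xi_3|_{\Gamma_1})(\eta|_{\Gamma_1})\,d\Gamma_1$, one obtains a sesquilinear form on $H^{1}(\Omega)$ that is bounded and, for $\lambda$ larger than a threshold depending on $\|\gamma\|_{L^{\infty}}$ (the $L^{2}$ diagonal part, of order $\lambda^{3}$, then dominates the zeroth-order $\gamma$-terms, while the $\kappa_0$-term on $\Gamma_0$ and the $\kappa_1$-term on $\Gamma_1$ enter with favorable sign), is coercive on $H^{1}(\Omega)$. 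Lax--Milgram produces a unique $\xi_1\in H^{1}(\Omega)$; elliptic regularity upgrades $\xi_1,\xi_2$ to $H^{2}(\Omega)$ and yields $\xi_3=\lambda\xi_2-f_2\in H^{1}(\Omega)=\mathcal{D}(A^{1/2})$, and the natural (conormal) boundary conditions of the weak formulation are precisely those in \eqref{domz}, so $\vec{\xi}\in\mathcal{D}(\mathbb{A})$. Lumer--Phillips then gives that $\mathbb{A}$ generates a $C_0$-semigroup of quasi-contractions, and the transfer through $M$ finishes the proof.

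\textbf{Main obstacle.} The delicate point is the range condition: since $\Gamma_0$ carries a Robin condition and $\Gamma_1$ a Neumann-type condition, the reduced elliptic operator is nonstandard and the $ANN^{*}A$ term must be interpreted via the extended identity \eqref{neq}; one has to check that the resulting form is genuinely $H^{1}$-coercive (this is where $\kappa_1\ge 0$, $\kappa_0>0$, and the subordination of the lower-order terms to large $\lambda$ are used) and, at the end, that the weak solution satisfies exactly the conormal conditions placing it in $\mathcal{D}(\mathbb{A})$ rather than in a larger space. Everything else is bookkeeping.
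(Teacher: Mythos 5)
Your proposal is correct in substance and follows the same overall skeleton as the paper (transfer to $\mathbb{A}=M\mathscr{A}M^{-1}$, then maximal dissipativity), but the execution differs at both key steps. The paper does not work with the full $\mathbb{A}$: it splits $\mathbb{A}=\mathbb{A}_d+P$ with $P$ bounded (containing all the $\gamma$- and coupling terms), so that $\mathbb{A}_d$ is \emph{exactly} dissipative and the bounded-perturbation theorem finishes the job; you instead keep $\mathbb{A}$ intact, prove quasi-dissipativity (your cancellation of the cross terms and the identification of the $\Gamma_1$-term via \eqref{neq} match the paper's computation for $\mathbb{A}_d$), and pay for the extra terms in the range condition by taking $\lambda$ large. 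For the range condition the paper reduces the resolvent system to an equation for the \emph{third} component, $K_\lambda\xi_3=\lambda A^{-1}h-bg$ with $K_\lambda=(\lambda^2+\lambda)A^{-1}+b(I+\lambda N(\kappa_1N^{*}A))$, and inverts $K_\lambda$ by positivity on $\mathcal{D}(A^{1/2})$; you reduce to a scalar elliptic problem for $\xi_1$ and invoke Lax--Milgram on $H^1(\Omega)$, using \eqref{norm} and \eqref{neq} to build the coercive form. Both routes are legitimate; the paper's buys a $\gamma$-free, $\lambda$-uniform argument (no largeness threshold, exact contraction for $\mathbb{A}_d$), while yours avoids introducing $K_\lambda$ and is closer to a standard variational solvability argument.

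One soft spot: your closing claim that elliptic regularity upgrades $\xi_1,\xi_2$ to $H^2(\Omega)$ so that the conormal conditions in \eqref{domz} hold is optimistic when $\kappa_1$ is only $L^\infty(\Gamma_1)$, since $N(\kappa_1 N^{*}A\xi_3)=N(\kappa_1\,\xi_3|_{\Gamma_1})$ then lies only in $H^{3/2}(\Omega)$. The paper sidesteps exactly this by verifying membership in the \emph{abstract} description of the domain, $\xi_2+N(\kappa_1N^{*}A\xi_3)\in\mathcal{D}(A)$, which in your setting follows directly from the weak formulation (the $A^{1/2}$-form of $\xi_2+N(\kappa_1N^{*}A\xi_3)$ against test functions is represented by an $L^2$ function); you should phrase the last step that way rather than through componentwise $H^2$ regularity. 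Density of the domain, which you treat by hand, is in any case automatic for a maximal quasi-dissipative operator on a Hilbert space, so that step can be omitted.
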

\begin{proof}
	Equivalently, we show that $\mathbb{A}$ generates a strongly continuous semigroup on $\mathbb{H}$. If $\{S(t)\}_{t \geq 0}$ is the said semigroup then $\{T(t)\}_{t\geq 0}$, $T(t) := M^{-1}S(t)M, t\geq 0$, will be the semigroup generated by $\mathscr{A}.$
	
	Write $\mathbb{A} = \mathbb{A}_d + P$ where $$P\vec{\xi} = \left(\xi_2,0,\dfrac{\gamma c^2}{b}\left(\xi_2- \dfrac{c^2}{b}\xi_1\right)+(1-\gamma)\xi_3\right), \ \ \vec{\xi} \in \mathbb{H}$$ is bounded in $\mathbb{H}$ and 
	\begin{equation}
		\mathbb{A}_d\vec{\xi} = \left(- \dfrac{c^2}{b}\xi_1,\xi_3,-\xi_3 - bA(\xi_2+N(\kappa_1N^*A\xi_3))\right), \ \vec{\xi} \in \mathcal{D}(\mathbb{A}_d),
	\end{equation} 
	where $\mathcal{D}(\mathbb{A}_d) := \mathcal{D}(\mathbb{A}).$ It then suffices to prove generation of $\mathbb{A}_d$ on $\mathbb{H}$, see \cite[Page 76]{pazy_semigroups_1992}
	
	We start by showing dissipativity: for $\vec{\xi}=(\xi_1,\xi_2,\xi_3)^\top \in \mathcal{D}(\mathbb{A})$ we have \begin{align*}
		\left(\mathbb{A}_d\vec{\xi},\vec{\xi}\right)_{\mathbb{H}} &= - \dfrac{c^2}{b}\|A^{1/2} \xi_1\|_{L^2(\Omega)}^2 +  b\left(A^{1/2} \xi_3, A^{1/2}\xi_2\right)  \\ & -\|\xi_3\|_{L^2(\Omega)}^2 - b(A^{1/2}\xi_2,A^{1/2}\xi_3)  -b\|\sqrt{\kappa_1}\xi_3\|_{L^2(\Gamma_1)}^2 \\ &= - \dfrac{c^2}{b}\|A^{1/2} \xi_1\|_{L^2(\Omega)}^2 -\|\xi_3\|_{L^2(\Omega)}^2 -b\|\sqrt{\kappa_1}\xi_3\|_{L^2(\Gamma_1)}^2 \leq 0,
	\end{align*} hence, $\mathbb{A}_d$ is dissipative in $\mathbb{H}$.
	
	For maximality in $\mathbb{H}$, given any $L = (f,g,h) \in \mathbb{H}$ we need to show that there exists $\Psi = (\xi_1,\xi_2,\xi_3)^\top \in \mathcal{D}(\mathbb{A})$ such that $(\lambda - \mathbb{A}_d)\Psi = L$, for some $\lambda >0.$ This leads to  a solvability of the system of equations: \begin{equation}\label{systm-max}
		\begin{cases}
			\lambda \xi_1 + \dfrac{c^2}{b}\xi_1 = f, \\ \lambda \xi_2 - \xi_3 = g, \\
			\lambda \xi_3 +\xi_3 + b A(\xi_2+ N(\kappa_1N^*A\xi_3))= h,
		\end{cases}
	\end{equation} which implies $\xi_1 = \left(\lambda + \dfrac{c^2}{b}\right)^{-1}f \in \mathcal{D}(A^{1/2}).$ Moreover, since $A^{-1} \in \mathcal{L}({L^2(\Omega)})$ a combination of the second and third equations above yields \begin{equation}
		\label{sol2} K_\lambda \xi_3 = \lambda A^{-1}h - bg
	\end{equation} where $K_\lambda: L^2(\Omega) \to L^2(\Omega)$ acts on an element $\xi \in L^2(\Omega)$ as $$K_\lambda \xi = \left[(\lambda^2 + \lambda)A^{-1}+b(I+\lambda N(\kappa_1N^\ast A)\right]\xi.$$ 
	
	We now notice the restriction $K_\lambda\rvert_{\mathcal{D}(A^{1/2})}$ is strictly positive. Indeed it follows by \eqref{neq} that, given $\xi \in \mathcal{D}(A^{1/2})$ we have 
		\begin{align*}
			(K_\lambda \xi, \xi)_{\mathcal{D}(A^{1/2})} &= (\lambda^2 + \lambda)\|\xi\|_2^2 + b\|A^{1/2}\xi\|_2^2 + b\lambda(A^{1/2}N(\kappa_1 N^\ast A)\xi,A^{1/2}\xi) \\ &= (\lambda^2 + \lambda)\|\xi\|_2^2 + b\|A^{1/2}\xi\|_2^2 + b\lambda\|\sqrt{\kappa_1}N^\ast A\xi\|_{\Gamma_1}^2 > 0.
		\end{align*} 
	
	Among the consequences of positivity, is the fact that 
	${K_\lambda}_{\mathcal{D}(A^{1/2})}^{-1} \in \mathcal{L}(\mathcal{D}(A^{1/2}))$. Therefore, since $\lambda A^{-1}h - bg \in \mathcal{D}(A^{1/2})$ we have that $$\xi_3 := K_\lambda^{-1}(\lambda A^{-1}h - bg) \in \mathcal{D}(A^{1/2})$$ is the solution of \eqref{sol2}. Finally, $$\xi_2 = \lambda^{-1}(\xi_3+g)  \in \mathcal{D}(A^{1/2}).$$
	
	For the final step to conclude membership of $(\xi_1,\xi_2,\xi_3)$ in $\mathcal{D}(\mathbb{A})$ we look at the \textit{abstract} version of the description of $\mathcal{D}(\mathbb{A})$: 
	\begin{equation*}
		\mathcal{D}(\mathbb{A}) = \left\{\vec{\xi} \in \mathbb{H}; \ \xi_2+N(\kappa_1N^*A\xi_3) \in \mathcal{D}(A)
		\right\}
	\end{equation*} 
	whereby one only needs to check that $\xi_2+N(\kappa_1N^*A\xi_3) \in \mathcal{D}(A)$ since the regularity for the triple $(\xi_1,\xi_2,\xi_3)$ to belong to $\mathbb{H}$ was already established. The desired regularity will follow from \eqref{systm-max}, which implies 
	\begin{align*}
		b\lambda (\xi_2+N(\kappa_1N^*A\xi_3)) &= - (\lambda^2 + \lambda)A^{-1}\xi_3 + \lambda A^{-1} h \in \mathcal{D}(A),
	\end{align*} since $\xi_3, h \in L^2(\Omega).$ 
	
	The proof is complete. 
\end{proof}

Theorem \ref{t1-1}(b) then follows as a straightforward corollary of Theorem \ref{gener} and Lemma \eqref{equiv_prob}.

\section{Stabilization} \label{secsta}

Our stability results rely on a chain of  estimates developed through the   process of the proof whose main ingredient is to propagate dissipation from a portion of the boundary $\Gamma_1$ into the entire domain. Let us  first  outline the main  conceptual ideas. 
\begin{itemize}
	\item[\bf (i)] In order to  handle  the estimates  on  $\Gamma_0$ -- the undissipated part of the boundary -- a typical radial vector field leads to conflicting signs in front of tangential boundary-time  derivative. In order to handle this, special vector fields  are introduced  which are constructed locally by "bending " tangentially  the radial field  on the  undissipated part of the   boundary. This can be accomplished by exploiting convexity of $\Gamma_0$  along with a general star shaped requirement. Having a vector field which is tangential to the boundary allows us to annihilate the normal component of this vector field -- taking care of the tangential derivatives on the undissipated part of the boundary (note that in the Dirichlet case, the contribution on the undissipated part of the boundary is just zero).
	\item[\bf (ii)] On the absorbing part of the boundary $\Gamma_1$  we use the fact that  the time derivative of the  solution is given through the energy relation. By applying microlocal analysis argument, one estimates  the space-time tangential contribution of the solution in terms of the time  time derivatives and some lower order terms. \item[\bf (iii)] The resulting lower order terms are eliminated by  a suitable compactness- uniqueness argument. 
\end{itemize}

We start by introducing the energy functional. We work with smooth (classical solutions) guaranteed by Theorem \ref{t1-1} and then Theorem \ref{thm34n} is obtained via density argument along with the  convexity of the energy functional.

Let $(u,u_t,u_{tt}) \in \mathbb{H}$ be a classical solution of \eqref{usist} and recall the corresponding $z$--problem determined via \eqref{usistz}, from which follows that $z = u_t + \frac{c^2}{b}u$ solves the equation
\begin{equation}\label{zprob}
	z_{tt} + bA(z_t + N(\kappa_1N^*Az)) = -\gamma u_{tt} + f,
\end{equation} with initial conditions described in \eqref{usistz}.

With this notation, we define the functional $E(t) = E_0(t) + E_1(t)$ where $E_i:[0,T] \to \mathbb{R}_+$ ($i = 0,1$) are defined by 
\begin{align}\label{E1} 
	E_1(t) & :=
	\dfrac{b}{2}\|A^{1/2}{z}\|_2^2 +  \dfrac{1}{2}\|z_t\|_2^2 + \dfrac{c^2}{2b}\|\gamma^{1/2}u_t\|_2^2
\end{align}  
and 
\begin{align}\label{E0} 
	E_0(t)& := \dfrac{1}{2}\|\alpha^{1/2}u_t\|_2^2 + \dfrac{c^2}{2}\|A^{1/2}{u}\|_2^2.
\end{align}

The next lemma guarantees that stability of solutions in $\mathbb{H}$ is equivalent to uniform exponential decay of the function $t \mapsto E(t).$
One thing to notice is that $E_1(t) $ is dissipative along the  unforced solution.This  no longer holds  for the full energy $E(t)$.  
\begin{lemma}\label{equinorm}
	Let $\Phi = (u,u_t,u_{tt})$ be a weak solution for the $u$-- problem in $\mathbb{H}$ and assume that \eqref{comp} is in force. Then the following statements are equivalent: 
	\begin{itemize}
		\item[\bf a)] $t \mapsto \|\Phi(t)\|_{\mathbb{H}}^2$ decays exponentially.
		
		\item[\bf b)] $t \mapsto \|M\Phi(t)\|_{\mathbb{H}}^2 = \|(u,z,z_t)\|_{\mathbb{H}}^2$ decays exponentially.
		
		\item[\bf c)] $t \mapsto E(t)$ decays exponentially.
	\end{itemize}
\end{lemma}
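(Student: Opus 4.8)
The plan is to show that the three quantities in (a)--(c) are, pointwise in $t$ and with constants independent of $t$ and of the particular solution, mutually equivalent; exponential decay of any one of them then transfers verbatim (same rate, adjusted multiplicative constant) to the others. Concretely I would establish $\|\Phi(t)\|_{\mathbb{H}}^2 \asymp \|M\Phi(t)\|_{\mathbb{H}}^2$, which yields (a)$\Leftrightarrow$(b), and $E(t) \asymp \|M\Phi(t)\|_{\mathbb{H}}^2$, which yields (b)$\Leftrightarrow$(c).

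For (a)$\Leftrightarrow$(b): since $M$ is an isomorphism of $\mathbb{H}$ with $M, M^{-1} \in \mathcal{L}(\mathbb{H})$, one has $\|M^{-1}\|_{\mathcal{L}(\mathbb{H})}^{-1}\|\Phi\|_{\mathbb{H}} \leq \|M\Phi\|_{\mathbb{H}} \leq \|M\|_{\mathcal{L}(\mathbb{H})}\|\Phi\|_{\mathbb{H}}$, so the two norms are comparable and the equivalence of the two decay statements is immediate.

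For (b)$\Leftrightarrow$(c): using \eqref{inH} I would write $\|M\Phi(t)\|_{\mathbb{H}}^2 = \|A^{1/2}u\|_2^2 + b\|A^{1/2}z\|_2^2 + \|z_t\|_2^2$, and adding \eqref{E0} and \eqref{E1},
\begin{equation*}
	E(t) = \tfrac{c^2}{2}\|A^{1/2}u\|_2^2 + \tfrac{b}{2}\|A^{1/2}z\|_2^2 + \tfrac12\|z_t\|_2^2 + \tfrac12\|\alpha^{1/2}u_t\|_2^2 + \tfrac{c^2}{2b}\|\gamma^{1/2}u_t\|_2^2 .
\end{equation*}
Because $\alpha\geq 0$ and $\gamma\geq 0$, dropping the last two terms gives $E(t)\geq \tfrac12\min\{c^2,b,1\}\,\|M\Phi(t)\|_{\mathbb{H}}^2$. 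For the reverse bound, $\alpha,\gamma\in L^\infty(\Omega)$ give $\tfrac12\|\alpha^{1/2}u_t\|_2^2 + \tfrac{c^2}{2b}\|\gamma^{1/2}u_t\|_2^2 \leq C\|u_t\|_2^2$, and the change of variables gives $u_t = z - \tfrac{c^2}{b}u$, whence, using $\mathcal{D}(A^{1/2}) = H^1(\Omega)\hookrightarrow L^2(\Omega)$, $\|u_t\|_2 \leq \|z\|_2 + \tfrac{c^2}{b}\|u\|_2 \leq C'(\|A^{1/2}z\|_2 + \|A^{1/2}u\|_2)$. Collecting terms yields $E(t)\leq C''\|M\Phi(t)\|_{\mathbb{H}}^2$ with $C''$ depending only on $b,c,\|\alpha\|_{L^\infty},\|\gamma\|_{L^\infty}$ and the embedding constant, and (b)$\Leftrightarrow$(c) follows as before.

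This lemma is essentially bookkeeping, so I do not anticipate a genuine obstacle; the two points worth flagging are (i) that $E$, unlike $E_1$, need not be monotone along the flow (as noted above), so one cannot shortcut the argument through a dissipation identity and must genuinely use the pointwise two-sided bound, including at $t=0$; and (ii) that for a weak (semigroup) solution $\Phi = (u,u_t,u_{tt})\in C([0,T];\mathbb{H})$ the objects $z = u_t + \tfrac{c^2}{b}u\in H^1(\Omega)$ and $z_t = u_{tt} + \tfrac{c^2}{b}u_t\in L^2(\Omega)$ entering $E$ and $M\Phi$ are well defined and depend continuously on $t$, so all three functions in (a)--(c) are honest nonnegative continuous functions of $t$ and the claimed equivalences make sense.
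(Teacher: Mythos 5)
Your proposal is correct and is essentially the argument the paper has in mind: the paper's own proof is just the remark that the lemma ``relies on algebraic manipulations'' with details deferred to \cite{bongarti2020boundary}, and your pointwise two-sided bounds ($\|\Phi\|_{\mathbb{H}}\asymp\|M\Phi\|_{\mathbb{H}}$ via boundedness of $M,M^{-1}$, and $E(t)\asymp\|M\Phi(t)\|_{\mathbb{H}}^2$ via $\alpha,\gamma\in L^\infty$, $\gamma\ge 0$, $u_t=z-\tfrac{c^2}{b}u$ and the Poincar\'e-type bound coming from \eqref{norm}) are precisely those manipulations, spelled out correctly.
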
 
\begin{proof} 
Proof relies on algebraic manipulations. Details can be found in 
	\cite{bongarti2020boundary}.
\end{proof}

\begin{remark}
	The purpose of Lemma \eqref{equinorm} is that it allows us to use both the expression of the energy $E(t) = E_0(t) + E_1(t)$ and the norm of the solution $\|(u,z,z_t)\|_{\mathbb{H}}^2$ interchangeably. The specific structure of the energy contributes to a discovery of certain invariances and dissipative laws. 
	However, from the topological point of view, it is essential that 
	the following three quantities $\|A^{1/2} z\|_2$, $\|z_t\|_2$ and $\|\nabla u\|_2$ 
	display  the appropriate decays. 
\end{remark}
The next proposition provides the set of main identities for the linear stabilization in $\mathbb{H}.$
\begin{proposition}\label{id}
	Let $T>0$. If $(u,z,z_t)$ is a classical solution of \eqref{usistz} then the following holds
	\begin{itemize}
		\item[\bf (i) ]{\bf{(Energy Identity)}} For $0\leq t\leq T$, 
		\begin{equation}\label{e1id}
			E_1(T) 
			+ b\int_t^T\int_{\Gamma_1} \kappa_1z_t^2d\Gamma_1ds  
			+ \int_t^T\int_\Omega \gamma u_{tt}^2 d\Omega ds
			= E_1(t) + \int_t^T\int_\Omega fz_t d\Omega ds.
		\end{equation}
		\item[\bf (ii)] {\bf{Energy ($L^1$--norm)--Reconstruction.}} For $0<s<T/2$, 
		\begin{equation}\begin{aligned}\label{e1rec}
				\int_s^{T-s} E_1(t) dt 
				\lesssim [E_1(s)+E_1(T-s)]
				\\ + C_T\left[ b\iD \kappa_1z_t^2d\Sigma_1 + \iQ \gamma u_{tt}^2dQ
				+ lot_\delta(z) \right]
				+ C_T\int_Q f^2 dQ.
		\end{aligned}\end{equation}
		where $lot_\delta(z)\leq C_\delta\sup_{t\in[0,T]}\left\{\|z\|_{H^{1-\delta}(\Omega)}^2+\|z_t\|_{H^{-\delta}(\Omega)}^2\right\}$, for $\delta>0$.
	\end{itemize}
\end{proposition}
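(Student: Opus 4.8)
The two identities are of quite different character, so I would treat them separately. For part (i), the energy identity, the plan is to multiply the $z$--equation \eqref{zprob} by $z_t$ and integrate over $\Omega\times[t,T]$. The term $(z_{tt},z_t)_\Omega$ integrates in time to $\tfrac12\|z_t\|_2^2$; the term $b(A z_t, z_t)_\Omega$ gives $\tfrac{b}{2}\tfrac{d}{dt}\|A^{1/2}z\|_2^2$ after recognizing $(Az_t,z_t)=(A^{1/2}z_t,A^{1/2}z_t)$ is \emph{not} quite what appears — rather one writes $b(A z_t + AN(\kappa_1 N^*Az), z_t)_\Omega$ and uses the identity \eqref{neq}, i.e. $N^*A z_t = z_t|_{\Gamma_1}$ and similarly for $z$, to convert the nonlocal term $b(AN(\kappa_1 N^*Az),z_t)_\Omega = b(\kappa_1 N^*Az, N^*Az_t)_{\Gamma_1} = b(\kappa_1 z, z_t)_{\Gamma_1}$, which together with the $b(A z_t, z_t)$ piece and the fact that $(A^{1/2}z, A^{1/2}z_t) + \kappa_0\int_{\Gamma_0}zz_t$ (via \eqref{norm}) reconstructs $\tfrac{b}{2}\tfrac{d}{dt}\|A^{1/2}z\|_2^2 + b\int_{\Gamma_1}\kappa_1 z_t^2$ — wait, one needs to be careful that the boundary term $b\int_{\Gamma_1}\kappa_1 z z_t$ is a time derivative and must be absorbed into the $\|A^{1/2}z\|_2^2$ definition consistent with \eqref{norm}. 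The right-hand side $(-\gamma u_{tt}+f, z_t)_\Omega$ gives, after noting $z_t = u_{tt} + \tfrac{c^2}{b}u_t$, the term $-\int\gamma u_{tt}^2 - \tfrac{c^2}{b}\int\gamma u_{tt}u_t + \int f z_t$; the middle term is $-\tfrac{c^2}{2b}\tfrac{d}{dt}\|\gamma^{1/2}u_t\|_2^2$, which is exactly the third term in $E_1$. Collecting and integrating in $t$ yields \eqref{e1id}. The only subtlety is the justification of all integrations by parts for classical solutions, which is legitimate by Theorem \ref{t1-1}(ii).

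For part (ii), the $L^1$--reconstruction, the strategy is the standard multiplier/energy-method bookkeeping: integrate the energy identity \eqref{e1id} again in $t$ over a subinterval, and use the equation together with an $H^1$ multiplier (such as $z$ itself, or $A^{-1}z_t$) to trade the "kinetic" energy $\|z_t\|_2^2$ against the "potential" energy $\|A^{1/2}z\|_2^2$ plus controllable terms. Concretely, I would test \eqref{zprob} against $z$ on $\Omega\times[s,T-s]$ (with a cutoff if desired), which produces $\int\!\int |A^{1/2}z|^2 - \int\!\int |z_t|^2 = [\text{boundary-in-time terms } (z_t,z)_\Omega]_{s}^{T-s} - b\int\!\int\kappa_1 z z_t\,d\Sigma_1 + \int\!\int(-\gamma u_{tt}+f)z$; the endpoint terms are bounded by $E_1(s)+E_1(T-s)$, the $\Sigma_1$ integral is absorbed (Cauchy--Schwarz) into $b\int\!\int\kappa_1 z_t^2$ plus a lower-order trace term $\|z\|^2_{L^2(\Gamma_1)}$ that is $lot_\delta(z)$ by trace theory, the $\gamma u_{tt}$ term goes into $\int_Q\gamma u_{tt}^2$ plus $\varepsilon\sup\|z\|^2$, and the $f$ term into $\int_Q f^2$. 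Combining this "equipartition" identity with \eqref{e1id} gives $\int_s^{T-s} E_1 \lesssim E_1(s)+E_1(T-s) + C_T[\,b\int\!\int\kappa_1 z_t^2 + \int_Q\gamma u_{tt}^2 + lot_\delta(z)\,] + C_T\int_Q f^2$. Care must be taken with the $\tfrac{c^2}{2b}\|\gamma^{1/2}u_t\|_2^2$ piece of $E_1$: since $u_t = z - \tfrac{c^2}{b}u$ only at the level of the change of variables, one controls $\|\gamma^{1/2}u_t\|_2^2$ in terms of $\|z_t\|_2^2$ integrated (via \eqref{e1id} itself) or simply notes $\gamma\in L^\infty$ and bounds it by $\|u_t\|_2^2 \lesssim \|z\|_2^2 + \|u\|_2^2$, the latter being $lot$.

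The main obstacle I anticipate is \emph{not} in these two identities per se — they are the routine "energy + equipartition" computations — but in making the lower-order term $lot_\delta(z)$ genuinely of the claimed compact/lower-order form, i.e. with a negative power of $\delta$ in the Sobolev index strictly below the energy level, uniformly in $t$. This requires being slightly generous in the Cauchy--Schwarz/Young steps (keeping an $\varepsilon$ in front of the top-order terms so they can be reabsorbed on the left) and invoking the trace theorem $H^{1-\delta}(\Omega)\hookrightarrow L^2(\Gamma)$ together with interpolation. The genuinely hard analysis of the paper — the convex/star-shaped geometric multiplier on $\Gamma_0$ and the microlocal trace estimate on $\Gamma_1$ that will later convert $\int_Q\gamma u_{tt}^2$ and the $\Sigma_1$ terms into full energy control — lies downstream of this proposition; here the only thing to watch is that every term produced is either already on the right-hand side of \eqref{e1rec} or absorbable into $lot_\delta(z)$, with all constants depending only on $T$ (and the fixed data $c,b,\tau,\|\alpha\|_\infty,\|\kappa_i\|_\infty$), not on the solution.
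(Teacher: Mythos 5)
For part (i) your route is essentially the paper's (the paper differentiates $2E_1(t)=\langle\Psi,\Psi\rangle$ along the flow, which amounts to multiplying the $z$--equation by $z_t$), but your handling of the boundary term is wrong as written. The abstract form of the $z$--equation consistent with the boundary condition $\partial_\nu z+\kappa_1 z_t=0$ on $\Gamma_1$ (see the domain \eqref{domz}) is $z_{tt}+bA\bigl(z+N(\kappa_1N^*Az_t)\bigr)=-\gamma u_{tt}+f$; in \eqref{zprob} as printed the roles of $z$ and $z_t$ in the nonlocal term are swapped, and you followed the misprint. With the correct form, \eqref{neq} gives $b\bigl(AN(\kappa_1N^*Az_t),z_t\bigr)=b\int_{\Gamma_1}\kappa_1 z_t^2$, which is exactly the dissipation appearing in \eqref{e1id}, while $b(Az,z_t)$ is the time derivative of $\tfrac b2\|A^{1/2}z\|_2^2$. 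Your version instead produces $b\int_{\Gamma_1}\kappa_1 z z_t$, a total time derivative that cannot be ``absorbed into the $\|A^{1/2}z\|_2^2$ definition'' (the norm \eqref{norm} carries only a $\Gamma_0$ term) and that would never generate the damping term asserted in \eqref{e1id}. Your treatment of $(-\gamma u_{tt},z_t)$ via $z_t=u_{tt}+\tfrac{c^2}{b}u_t$ is correct and matches the paper.

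For part (ii) there is a genuine gap: you propose only the multiplier $z$ (the equipartition identity, the paper's \eqref{zmul}) combined with \eqref{e1id}. The equipartition identity controls only the \emph{difference} $\int\!\!\int\bigl(b|\nabla z|^2-z_t^2\bigr)$; to dominate the sum $\int_s^{T-s}E_1$ you must bound one of the two terms separately, and the only way \eqref{e1id} does that is through the pointwise bound $E_1(t)\le E_1(s)+\dots$, which yields $\int_s^{T-s}E_1\le (T-2s)E_1(s)+\dots$, i.e.\ a constant proportional to $T$ in front of the endpoint energies. In \eqref{e1rec}, however, the bracket $[E_1(s)+E_1(T-s)]$ must carry a constant independent of $T$ (this is what $\lesssim$ means in the paper), and this $T$-uniformity is exactly what makes the absorption argument of Theorem \ref{thm34} work (one compares $\int_0^TE_1$ with $TE_1(T)$ and takes $T$ large); with a $T$-growing coefficient the argument collapses and one only recovers boundedness, not decay. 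The mechanism producing the $T$-uniform bound is precisely what you defer as ``downstream'': the vector field $h$ with $h\cdot\nu=0$ on $\Gamma_0$ and uniformly positive Jacobian (built from convexity of $\Gamma_0$ and the star-shaped condition \eqref{star2}), the multipliers $h\cdot\nabla z$ and $z\,\mbox{div}(h)$ leading to \eqref{potz}, the normal/tangential splitting of the boundary terms $\Upsilon(s,T-s)$, and the microlocal tangential-trace estimate \eqref{tang1} needed to control $\int\|\partial_\tau z\|^2_{L^2(\Gamma_1)}$, since with Neumann/Robin data the tangential derivative on the boundary is not controlled by the damping. These steps are the content of part (ii), not of a later result, so the proposal as written does not prove \eqref{e1rec}. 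A further, smaller issue: your fallback bound $\|\gamma^{1/2}u_t\|_2^2\lesssim\|z\|_2^2+\|u\|_2^2$ introduces $\|u\|_2^2$, which belongs neither to $lot_\delta(z)$ nor to the right-hand side of \eqref{e1rec}; this term should instead be handled by writing $\gamma^{1/2}u_t(t)=\gamma^{1/2}u_t(s)+\int_s^t\gamma^{1/2}u_{tt}\,d\sigma$, which places it among the $C_T$-weighted terms $E_1(s)$ and $\int_Q\gamma u_{tt}^2$.
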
 

\begin{remark}
	Notice that the energy identity (\ref{e1id}) involves only partial 
	information on the dynamics. In fact $E_1(t) $ does not reconstruct $\|\nabla u\|_2$, a critical ingredient of the MGT system. The second inequality \eqref{e1rec} is an "almost" reconstruction of partial energy in terms of the dissipation and lower order terms. 
\end{remark}

\begin{proof}
	\textbf{1. Proof of \eqref{e1id}.} Let, on $\mathbb{H}$, the bilinear form $\langle\cdot,\cdot\rangle$ be given by
	\begin{equation}\begin{aligned}\label{biform}
			\langle \vec{\xi},\vec{\varphi}\rangle = 
			b\left(A^{1/2}\xi_2,A^{1/2}\varphi_2\right)
			+ \left(\xi_3,\varphi_3\right)
			+ \frac{c^2}{b}\left(\gamma\left(\xi_2-\frac{c^2}{b}\xi_1\right),\varphi_2-\frac{c^2}{b}\varphi_1\right)
	\end{aligned}\end{equation} for all $\vec{\xi} = (\xi_1,\xi_2,\xi_3)^\top, \vec{\varphi} = (\varphi_1,\varphi_2,\varphi_3)^\top \in \mathbb{H}$,
	which is continuous. Moreover, recalling that $\Psi(t)=(u(t),z(t),z_t(t))$ it follows that $2E_1(t) = \langle\Psi(t),\Psi(t)\rangle$. Therefore,
	\begin{align*}
		\dfrac{dE_1(t)}{dt} 
		&= \left\langle\dfrac{d\Psi(t)}{dt},\Psi(t)\right\rangle
		= \left\langle\mathbb{A}\Psi(t) + G,\Psi(t)\right\rangle \\
		&= \left\langle \!\!\! \left(z-\frac{c^2}{b}u,z_t,-\gamma\!\left(z_t-\frac{c^2}{b}z+\frac{c^4}{b^2}u\right)\!-bA(z_t+N(\kappa_1N^*Az))+f\right)^T\!\!\!\!,\Psi(t)\!\!\! \right\rangle \\
		&= 
		\left(-\gamma\left(z_t-\frac{c^2}{b}z+\frac{c^4}{b^2}u\right)-bA(z_t+N(\kappa_1N^*Az))+f,z_t\right)\\
		&\quad +b\left(A^{1/2}z_t,A^{1/2}z\right)+ \frac{ c^2}{b}\left(\gamma\left(z_t-\frac{c^2}{b}\left(z-\frac{c^2}{b}u\right)\right),z-\frac{c^2}{b}u\right) \\
		&= 
		-\left(\gamma\left(z_t-\frac{c^2}{b}z+\frac{c^4}{b^2}u\right),z_t\right)
		-b(A(z_t+N(\kappa_1N^*Az)),z_t)\\
		&\quad +b\left(A^{1/2}z_t,A^{1/2}z\right)+ (f,z_t)
		+\frac{ c^2}{b}\left(\gamma\left(z_t-\frac{c^2}{b}\left(z-\frac{c^2}{b}u\right)\right),z-\frac{c^2}{b}u\right)\\ &= -\int_\Omega\!\!\!\gamma \left(z_t-\frac{c^2}{b}z+\frac{c^4}{b^2}u\right)\left(z_t-\frac{c^2}{b}z+\frac{c^4}{b^2}u\right) d\Omega
		- b\int_{\Gamma_1}\!\!\!\kappa_1z_t^2d\Gamma_1 + (f,z_t) \\ &= -\int_\Omega \gamma u_{tt}^2d\Omega - b \int_{\Gamma_1}\kappa_1z_t^2d\Gamma_1 + \int_\Omega fz_td\Omega
	\end{align*} since $z_t-\frac{c^2}{b}z+\frac{c^4}{b^2}u = u_{tt}$. Identity \eqref{e1id} then follows by an integration in time on $(t,T).$
	
	\
	
	\textbf{2. Proof of \eqref{e1rec}}
	This second part will be established via multipliers technique making strong use of the geometrical conditions preceding the statement of Theorem \ref{thm34n}. However, 
	due to the fact that Neumann boundary conditions are imposed on the acoustic pressure $u$ and the absorbtion of the energy (dissipation) occurs only on a portion ($\Gamma_1$) of it, standard radial multipliers used in observability  theory of waves do not apply. There is a conflicting sign requirement for the vector field to be constructed \cite{LTZ,LL_1999_CC}. To resolve this issue one needs to construct a different multiplier with the property that its Jacobian  generates a positive metric, and at the same time complies with the conflicting sign on $\Gamma_1$. This has been accomplished (see for instance \cite{LTZ}) under the condition that the non-dissipative part of the boundary is conve and  leads to a construction  of a special vector field $h\in C^1$ which enjoys the following properties \cite{LL_1999_CC}
	
	\begin{equation*}
		h\cdot\nu = 0~\mbox{on}~\Gamma_0,\quad J(h(x))\geq c_0 > 0, x\in \Omega 
	\end{equation*}
	where $J(h)$ denotes the Jacobian matrix of  the vector field $h$. 
	Such vector field has been constructed in \cite{LTZ} based on the idea introduced in \cite{tataru_regularity_1998} and further generalised in \cite{LL_1999_CC} for domains $\Omega$ with the properties: 
	$\Gamma_0 $ is a {\it convex part}  of $\Gamma$ which also satisfies star shaped condition:
	\begin{equation}\label{star2}
		(x-x_0 )\cdot \nu \leqslant 0, \ \mbox{on} \ \Gamma_0, \ \mbox{for some} \ x_0 \in \mathbb{R}^n
	\end{equation}
	
	The  condition  \eqref{star2}  guarantees that a sufficiently large  portion of the boundary  $\Gamma$ is under absorbtion. This is typical condition  required by  Moravetz-Strauss  theory. However, convexity of $\Gamma_0 $ is a new requirement. 
	This allows for a construction of suitable vector field with  the postulated properties.  The construction is based on a  perturbation [bending tangentially]  of the radial vector field. 
	With that field $h$ in hand, we first multiply equation \eqref{zprob} by $h\cdot\nabla{z}$ integrate by parts in $(s,T-s)\times\Omega$. This gives
	\begin{align}
		&\dfrac{b}{2}\int_s^{T-s}\!\!\!\!\int_\Omega J(h)|\nabla z|^2d\Omega dt + \dfrac{1}{2}\int_s^{T-s}\!\!\!\!\int_\Omega \left(z_t^2 - b|\nabla z|^2\right)\dvv{h}d\Omega dt = \nonumber  \\ &- \int_s^{T-s}\!\!\!\!\int_\Omega \gamma u_{tt} (h\cdot\nabla z)d\Omega dt - \int_\Omega z_t(h\cdot\nabla z)d\Omega\biggr\rvert_s^{T-s} \nonumber \\ &+ \dfrac{1}{2}\int_s^{T-s}\!\!\!\!\int_{\Gamma_1}\!\!\!\! \left(z_t^2 - b|\nabla z|^2\right)(h\cdot\nu) d\Gamma_1 dt + \dfrac{1}{2}\int_s^{T-s}\!\!\!\!\int_{\Gamma_0} \!\!\!\!\left(z_t^2 - b|\nabla z|^2\right)(h\cdot\nu) d\Gamma_0 dt
	\label{cancel} \\ 
		&	+ b\int_s^{T-s}\!\!\!\!\int_{\Gamma} \partial_\nu z(h\cdot\nabla z) d\Gamma dt + \int_s^{T-s}\!\!\!\!\int_\Omega f(h\cdot\nabla z)d\Omega dt. \label{hgradz}
	\end{align} where we notice that the second term in \eqref{cancel} vanishes since $h \cdot \nu = 0$ on $\Gamma_0.$
	
	Next, we multiply equation \eqref{zprob} by $z\mbox{div}(h)$ and integrate by parts in $(s,T-s)\times\Omega$. This leads to
	\begin{align}
		&\dfrac{1}{2}\int_s^{T-s}\!\!\!\!\int_\Omega \left(b|\nabla z|^2 - z_t^2\right)\mbox{div}(h)d\Omega dt = \dfrac{b}{2} \int_s^{T-s}\!\!\!\!\int_\Gamma \partial_\nu z z \mbox{div}(h)d\Gamma dt\nonumber \\ &- \dfrac{1}{2}\int_s^{T-s}\!\!\!\!\int_\Omega \gamma u_{tt} z \mbox{div}(h) d\Omega dt - \dfrac{1}{2} \int_\Omega z_tz \mbox{div}(h)d\Omega\biggr\rvert_s^{T-s} \nonumber \\ &- \dfrac{b}{2} \int_s^{T-s}\!\!\!\!\int_\Omega z \nabla z \cdot \nabla(\mbox{div}(h))d\Omega dt + \dfrac{1}{2}\int_s^{T-s}\!\!\!\!\int_\Omega fz\dvv{h}d\Omega dt \label{zdivh} .
	\end{align}
	
	Adding \eqref{hgradz} with \eqref{zdivh} we have \begin{align}
		&\dfrac{b}{2}\int_s^{T-s}\!\!\!\!\int_\Omega J(h)|\nabla z|^2d\Omega dt = \\
		&- \int_s^{T-s}\!\!\!\!\int_\Omega \gamma u_{tt} A_h(z)d\Omega dt - \int_\Omega z_tA_h(z)d\Omega\biggr\rvert_s^{T-s} + \int_s^{T-s}\!\!\!\!\int_\Omega fA_h(z)d\Omega dt\nonumber \\ &+ \dfrac{1}{2}\int_s^{T-s}\!\!\!\!\int_{\Gamma_1} \left( z_t^2 - b|\nabla z|^2 \right)(h\cdot\nu) d\Gamma_1 dt + b\int_s^{T-s}\!\!\!\!\int_\Gamma \partial_\nu zA_h(z)d\Gamma dt \nonumber\\
		&- \dfrac{b}{2} \int_s^{T-s}\!\!\!\!\int_\Omega z \nabla z \cdot \nabla(\mbox{div}(h))d\Omega dt \label{est1}
	\end{align} 
	where $A_h(z) = h\cdot\nabla z + \dfrac{1}{2}z\dvv{h}$. Notice now that, we have the upper estimate $\|A_hz\|_{L^2(\Omega)}^2\lesssim\|\nabla{z}\|_{L^2(\Omega)}^2 + \|z\|_{L^2(\Omega)}^2$\footnote{By $a \lesssim b$ we mean that there exists a constant $C>0$ -- possibly depending on any \textit{fixed} quantity of the problem: $c, b, \kappa_0, \kappa_1$ and $\max\limits_{x \in \overline{\Gamma}} |h(x)|$ -- but independent of time and $\gamma$.}, which combined with Peter-Paul's inequality implies
		\begin{align}\label{est1-1}
			\left|\int_s^{T-s}\!\!\!\!\int_{\Omega} \gamma u_{tt} A_h(z) d\Omega dt\right|
			&\lesssim \dfrac{b}{2}\int_s^{T-s}\!\!\!\!\int_{\Omega}\varepsilon|\nabla{z}|^2 d\Omega dt
			\nonumber \\ &+ C_\varepsilon\left[\int_s^{T-s}\!\!\!\!\int_{\Omega}\gamma|u_{tt}|^2d \Omega dt + lot_\delta(z) \right]
		\end{align}
	for $\varepsilon>0$ to be precised later. Analogously, we deal with the last integral in the RHS of \eqref{est1} as follows
	\begin{equation}\label{est1-2}
		\left| \dfrac{b}{2} \int_s^{T-s}\!\!\!\!\int_{\Omega} z \nabla z \nabla(\mbox{div}(h)) d\Omega dt \right|
		\lesssim \dfrac{b}{2}\int_s^{T-s}\!\!\!\!\int_{\Omega}\varepsilon|\nabla{z}|^2 d\Omega dt + C_\varepsilon lot_\delta(z).
	\end{equation}
	Plugging \eqref{est1-1} and \eqref{est1-2} into \eqref{est1} and choosing $\varepsilon<J(h)/2$ we conclude the following upper estimate for the potential energy of $z$
	\begin{equation}\begin{aligned}\label{potz}
			\dfrac{b}{2}\int_s^{T-s}\!\!\!\!\int_{\Omega}|\nabla{z}|^2 d\Omega dt
			\lesssim [E_1(s)+E_1(T)] 
			+ b\int_{\Sigma_1}\kappa_1|z_t|^2 d\Sigma_1
		\\	+ \gamma\int_Q|u_{tt}|^2 dQ 
			+ \dfrac{b}{2}\int_s^{T-s}\!\!\!\!\int_{\Gamma_1}|\nabla z|^2(h\cdot\nu) d\Gamma_1 dt \\ + b\int_s^{T-s}\!\!\!\!\int_{\Gamma} \partial_\nu zA_h(z)d\Gamma dt
			+ \int_Q f^2 dQ
			+ lot_\delta(z).
	\end{aligned}\end{equation}
	In order to obtain the estimate for the kinetic part of the energy, we multiply \eqref{zprob} by $z$ and integrate by parts over $(s,T-s)\times\Omega$ to obtain 
	\begin{align}
		&\int_s^{T-s}\!\!\!\!\int_{\Omega} \left[b|\nabla{z}|^2 - z_t^2\right]d\Omega dt + \int_s^{T-s}\!\!\!\!\int_{\Gamma_0} \kappa_0z^2 d\Gamma_0dt = \int_s^{T-s}\!\!\!\!\int_{\Omega} fz d\Omega dt \nonumber \\ 
		& - \int_s^{T-s}\!\!\!\!\int_{\Omega} \gamma u_{tt}z d\Omega dt - \int_\Omega z_tzd\Omega \biggr\rvert_s^{T-s}+\dfrac{b}{2}\int_{\Gamma_1}\kappa_1z^2d\Gamma_1\biggr\rvert_s^{T-s}. \label{zmul} 
	\end{align}
	Identity \eqref{zmul} implies the following upper estimate for the kinetic energy
	\begin{equation}\begin{aligned}\label{kinz}
			\frac{1}{2}\int_s^{T-s}\!\!\!\!\int_{\Omega}|z_t|^2 d\Omega dt 
			\lesssim \frac{b}{2}\int_s^{T-s}\!\!\!\!\int_{\Omega}|\nabla{z}|^2 d\Omega dt + \frac{b}{2}\int_s^{T-s}\!\!\!\!\int_{\Gamma_0}\kappa_0|z|^2d\Gamma_0
		\\	+ [E_1(s)+E_1(T)] 
			+ b\int_{\Sigma_1}\kappa_1|z_t|^2d\Sigma_1
			+ \int_{Q}\gamma u_{tt}^2dQ 
			+ \int_Q f^2 dQ
			+ lot_\delta(z).
	\end{aligned}\end{equation}
	Combining \eqref{potz} with  \eqref{kinz} and accounting for \eqref{norm} we conclude
	\begin{equation}\begin{aligned}\label{e1est}
			\int_s^{T-s} E_1(t) dt
			\lesssim [E_1(s)+E_1(T-s)] 
			+ b\int_{\Sigma_1}\kappa_1|z_t|^2d\Sigma_1
			\\ + \int_{Q}\gamma u_{tt}^2dQ 
			+ lot_\delta(z)
			+ \Upsilon(s,T-s)
			+ \int_Q f^2 dQ
	\end{aligned}\end{equation}
	where 
	$$\Upsilon(s,T-s) = \dfrac{b}{2}\int_s^{T-s}\!\!\!\!\int_{\Gamma_1}|\nabla z|^2(h\cdot\nu) d\Gamma_1 dt + b\int_s^{T-s}\!\!\!\!\int_{\Gamma} \partial_\nu zA_h(z)d\Gamma dt$$
	and the boundary integral $\int_{\Gamma_0} |z|^2 d \Gamma_0$ resulting from \eqref{norm} is included in $lot_{\sigma}(z) $. The latter is by virtue of trace estimate and  compact embedding $H^{1} (\Omega)  \subset H^{1/2}(\Omega).$
	The boundary integrals above are estimated next. Recalling the notation  $A_h(z)= h\cdot\nabla z + \dfrac{1}{2}z\dvv{h}$ we have
	\begin{equation}\begin{aligned}\label{best1}
			\Upsilon(s,T-s)
			&= \frac{b}{2}\int_\alpha^{T-s}\int_{\Sigma_1}|\nabla{z}|^2(h\cdot\nu)d\Sigma_1dt
			+ b\int_s^{T-s}\int_{\Sigma}\partial_\nu{z}(h\cdot\nabla{z}) d\Sigma dt \\
			&\quad\quad+ \frac{b}{2}\int_s^{T-s}\int_{\Sigma}\partial_\nu{z}z\mbox{div}(h) d\Sigma dt\\
			&\equiv \frac{b}{2} I_1 + b I_2 + \frac{b}{2}I_3.
	\end{aligned}\end{equation}
	Moreover, we write the gradient at boundary as
	\begin{equation*}
		\nabla{z} = (\underbrace{\nabla{z}\cdot\nu}_{\partial_\nu{z}})\nu + \sum_{i=1}^{n-1}(\underbrace{\nabla{z}\cdot\tau_i}_{\partial_{\tau_i}{z}})\tau_i = (\partial_\nu{z})\nu + \sum_{i=1}^{n-1}(\partial_{\tau_i}{z})\tau_i,
	\end{equation*}
	which implies $|\nabla{z}|^2 = |\partial_\nu{z}|^2 + |\partial_\tau{z}|^2$. These along with the boundary conditions for the $z$--equation allow us to estimate the first integral in the RHS of \eqref{best1} as follows
	\begin{equation}\label{I1_1}
		|I_1| \lesssim  \int_{\Sigma_1}\kappa_1|z_t|^2 d\Sigma_1
		+ \int_s^{T-s}\|\partial_\tau{z}\|_{L^2(\Gamma_1)}^2 dt.
	\end{equation}
	
	The second integral $I_2$ is more involved. We first rewrite it as
	\begin{equation}\begin{aligned}\label{I2}
			I_2 
			&= \int_s^{T-s}  \int_{\Gamma_1} \partial_\nu{z}(h\cdot\nabla{z}) d\Gamma_1 dt \\
			&= \int_s^{T-s}\int_{\Gamma_1} |\partial_\nu{z}|^2(h\cdot\nu) d\Gamma_1 dt
			+ \int_s^{T-s} \left(\int_{\Gamma_1}+\int_{\Gamma_0}\right)\partial_\nu{z}\partial_{\tau}{z}(h\cdot\tau) d\Sigma dt \\
			&\equiv I_{2,1} + I_{2,2} + I_{2,3}
	\end{aligned}\end{equation} and then estimate the three resulting boundary integral terms. We notice that the boundary conditions for $z$ on $\Gamma_1$ allow us to estimate $I_{2,1}$ as
	\begin{equation}\label{I21}
		I_{2,1} \lesssim \int_{\Sigma_1}\kappa_1|z_t|^2d\Sigma_1.
	\end{equation}
	and $I_{2,2}$ as
	\begin{equation}\begin{aligned}\label{I22}
			I_{2,2}=\int_s^{T-s}\int_{\Gamma_1} \partial_\nu{z}\partial_\tau{z}(h\cdot\tau)d\Gamma_1dt
			\lesssim \int_{\Sigma_1} \kappa_1|z_t|^2 d\Sigma_1 + \int_s^{T-s}\|\partial_\tau{z}\|_{L^2(\Gamma_1)}^2 dt.
	\end{aligned}\end{equation} 
	
	For the remaining integral $I_{2,3}$ we use the boundary condition for $z$ on $\Gamma_0$: $\partial_\nu{z}=-z$ and the trace theorem, which implies that $z|_{\Gamma_0}\in H^{1/2}(\Gamma_0)$. On the other hand, since $z\in H^1(\Omega)$ we have $\partial_\tau{z}\in H^{-1/2}(\Omega)$. Hence, for $\varepsilon>0$ we have
	\begin{equation}\begin{aligned}\label{I23}
			I_{2,3} = \int_s^{T-s}\int_{\Gamma_0}\partial_\nu{z}\partial_{\tau}{z}(h\cdot\tau) d\Gamma_0 dt
			\\ \lesssim \int_0^T\left[C_\varepsilon\|\partial_\nu{z}\|_{\delta,\Gamma_0}^2 + \varepsilon\|\partial_\tau{z}\|_{-\delta,\Gamma_0}^2 \right]dt \\
			\lesssim C_{\varepsilon} \int_0^T \|z\|_{\delta,\Gamma_0}^2 dt 
			+ \varepsilon \int_0^T \|z\|_{3/2-\delta,\Omega}^2 dt \\ 
			\lesssim C_\varepsilon \int_0^T \|z\|_{H^{1/2+\delta}(\Omega)}^2dt + \varepsilon \int_0^T \| z\|_{H^1(\Omega)}^2dt \\ 
			\lesssim \varepsilon \int_0^T \|\nabla z\|_{L^2(\Omega)}^2dt + C_T lot_\delta(z)
	\end{aligned}\end{equation}
	for $\varepsilon>0$ to be determined. Here also, the boundary term in (\ref{norm}) is included in a  lower order term. 
	
	Plugging \eqref{I21}, \eqref{I22} and \eqref{I23} into \eqref{I2} we estimate $I_2$ as
	\begin{equation}\label{I2_1}
		|I_2| 
		\lesssim \int_{\Sigma_1} \kappa_1|z_t|^2 d\Sigma_1
		+ \int_s^{T-s}\|\partial_\tau{z}\|_{L^2(\Gamma_1)}^2 dt
		+ \varepsilon \int_0^T \|\nabla z\|_{L^2(\Omega)}^2dt + C_T lot_\delta(z)
	\end{equation}
	Finally, integral $I_3$ is estimated as
	\begin{equation}\label{I3}
		|I_3|
		\lesssim \int_s^{T-s}\int_{\Gamma} |\partial_\nu{z}|^2 + |z|^2 d\Gamma dt 
		\lesssim \int_{\Sigma_1} \kappa_1|z_t|^2 d\Sigma_1 dt + lot_\delta(z).
	\end{equation}
	
	Collecting \eqref{I1_1}, \eqref{I2_1} and \eqref{I3} and returning to \eqref{best1} we conclude
	\begin{equation}\begin{aligned}\label{best2}
			|\Upsilon(s,T-s)|
			\lesssim 
			\int_{\Sigma_1}\kappa_1|z_t|^2 d\Sigma
			+ \int_s^{T-s}\|\partial_\tau{z}\|_{L^2(\Gamma_1)}^2 dt 
			\\+ \varepsilon \int_0^T \|\nabla z\|_{L^2(\Omega)}^2dt 
			+ C_T lot_\delta(z).
	\end{aligned}\end{equation}
	The tangential derivative above is estimated using an adaptation of Lemma 2.1 in \cite{LL_2002_NA}, which was obtained for the homogeneous case,
	\begin{equation}\begin{aligned}\label{tang1}
		\int_s^{T-s} \|\partial_\tau{z}\|_{0,\Gamma_1}^2 dt
		\leqslant \\ C_T \left[ \int_0^T \|\partial_\nu{z}\|_{0,\Gamma}^2 + \|z_t\|_{0,\Gamma_1}^2 dt + lot_\delta(z) + \|\gamma u_{tt} + f\|_{H^{-1/2+s}(Q)}^2 \right].\end{aligned}
	\end{equation}
	Combining \eqref{best2} and \eqref{tang1} we arrive at
	\begin{equation}\label{best3}\begin{split}
		|\Upsilon(s,T-s)|
		\lesssim \varepsilon \int_0^T \|\nabla z\|_{L^2(\Omega)}^2dt
		+ C_T\left[ \int_{\Sigma_1}\kappa_1|z_t|^2d\Sigma_1 
		+ lot_\delta(z) \right. \\ \left. + \|\gamma u_{tt} + f\|_{H^{-1/2+s}(Q)}^2 \right]
	\end{split}\end{equation}
	Returning with \eqref{best3} to \eqref{e1est} and choosing $\varepsilon>0$ properly we conclude \eqref{e1rec}.
\end{proof}

Our next result deals with $lot_\delta(z)$, which can be absorbed by the damping using a compactness uniqueness argument.
\begin{proposition}\label{compuniq}
	For $T>0$ there exists a constant $ C_T > 0 $ such that  the following inequality holds:
	\begin{equation}
		lot_\delta(z)
		\leq C_T\left[ b\int_{\Sigma_1} \kappa_1|z_t|^2 d\Sigma_1 
		+ \int_Q \gamma|u_{tt}|^2 dQ \right].
	\end{equation}
\end{proposition}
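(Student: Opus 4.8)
The plan is a compactness--uniqueness argument, run by contradiction. Since here $f\equiv0$ and the $z$--problem \eqref{usistz} is linear and autonomous, if the asserted inequality failed then, by homogeneity, there would be a sequence $(u^n,z^n,z^n_t)$ of classical solutions of \eqref{usistz} (equivalently, classical MGT solutions, via Lemma \ref{equiv_prob}) with
$$lot_\delta(z^n)=1,\qquad D^n:=b\int_{\Sigma_1}\kappa_1(z^n_t)^2\,d\Sigma_1+\int_{Q}\gamma\,(u^n_{tt})^2\,dQ,$$
and $D^n\to0$ as $n\to\infty$. The idea is to produce, in the limit, a solution of the \emph{homogeneous} problem which carries over--determined (zero Cauchy) data on the dissipated piece $\Sigma_1$, and then to show by unique continuation that such a limit must vanish, contradicting $lot_\delta(z^n)=1$.

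\emph{Uniform bounds and extraction.} By the energy identity \eqref{e1id} (with $f\equiv0$) each $t\mapsto E_1^n(t)$ is non--increasing, while the reconstruction estimate \eqref{e1rec}, together with $D^n\to0$ and $lot_\delta(z^n)=1$, gives $\int_s^{T-s}E_1^n(t)\,dt\lesssim E_1^n(s)+E_1^n(T-s)+C_T$; combining the two yields $\sup_n\sup_{[0,T]}E_1^n<\infty$, hence $z^n$ bounded in $C([0,T];H^1(\Omega))$, $z^n_t$ bounded in $C([0,T];L^2(\Omega))$, and then, from \eqref{zprob}, $z^n_{tt}$ bounded in $C([0,T];H^{-1}(\Omega))$. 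By the Aubin--Lions--Simon lemma with the compact embeddings $H^1\hookrightarrow\hookrightarrow H^{1-\delta}\hookrightarrow L^2$ and $L^2\hookrightarrow\hookrightarrow H^{-\delta}\hookrightarrow H^{-1}$, a subsequence satisfies $z^n\to\bar z$ in $C([0,T];H^{1-\delta}(\Omega))$ and $z^n_t\to\bar z_t$ in $C([0,T];H^{-\delta}(\Omega))$ (and weakly in the energy spaces). Since $u^n$ itself need not be bounded, I peel off its exponentially decaying part by setting $v^n(t):=\int_0^t e^{-\frac{c^2}{b}(t-\sigma)}z^n(\sigma)\,d\sigma$, so that $v^n$ is bounded in $C([0,T];H^1(\Omega))$, $v^n_t=z^n-\tfrac{c^2}{b}v^n$ and $v^n_{tt}=z^n_t-\tfrac{c^2}{b}v^n_t$ are bounded in $C([0,T];L^2(\Omega))$, $v^n\to\bar v$ (further subsequence), $\bar v(0)=0$, $\bar z=\bar v_t+\tfrac{c^2}{b}\bar v$, and $u^n=e^{-\frac{c^2}{b}t}u^n_0+v^n$ with the discarded part $e^{-\frac{c^2}{b}t}u^n_0$ solving the MGT equation up to a forcing proportional to $\gamma\,e^{-\frac{c^2}{b}t}u^n_0$.

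\emph{The limiting over--determined problem.} Writing \eqref{zprob} for $z^n$ as $z^n_{tt}+bA\big(z^n_t+N(\kappa_1N^*Az^n)\big)=-\gamma u^n_{tt}$ and using that $\gamma^{1/2}u^n_{tt}\to0$ in $L^2(Q)$ forces $\gamma u^n_{tt}\to0$ in $L^2(Q)$, the limit satisfies $\bar z_{tt}+bA\big(\bar z_t+N(\kappa_1N^*A\bar z)\big)=0$ in $\mathcal D'(Q)$, i.e. $\bar z_{tt}-b\Delta\bar z_t=0$ in $Q$, with $\partial_\nu\bar z+\kappa_0\bar z=0$ on $\Sigma_0$ and $\partial_\nu\bar z+\kappa_1\bar z_t=0$ on $\Sigma_1$. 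Moreover $D^n\to0$ and $\kappa_1$ bounded away from $0$ give $\bar z_t=0$ on $\Sigma_1$, whence $\partial_\nu\bar z=0$ on $\Sigma_1$ as well. Thus $w:=\bar z_t$ solves the heat equation $w_t-b\Delta w=0$ in $Q$ with vanishing Cauchy data $w=\partial_\nu w=0$ on the lateral piece $\Sigma_1$.

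\emph{Uniqueness, and the main obstacle.} By parabolic regularity $w$ is smooth in the interior, and unique continuation for the heat equation from an open portion of the lateral boundary (applied on $(\varepsilon,T-\varepsilon)$ and then $\varepsilon\downarrow0$) forces $w\equiv0$; hence $\bar z\equiv\bar z_0(x)$ is stationary and $\bar v(t)=\tfrac{b}{c^2}\bar z_0\big(1-e^{-\frac{c^2}{b}t}\big)$. Inserting this $\bar v$ into the limiting MGT equation, whose forcing (the weak limit of $-\tfrac{c^4}{b^2}\gamma\,e^{-\frac{c^2}{b}t}u^n_0$) has a purely exponentially decaying time profile, and matching the time--independent parts forces $\Delta\bar z_0=0$ in $\Omega$; together with $\partial_\nu\bar z_0+\kappa_0\bar z_0=0$ on $\Gamma_0$ and $\partial_\nu\bar z_0=0$ on $\Gamma_1$, Green's formula gives $\|\nabla\bar z_0\|_{L^2(\Omega)}^2+\kappa_0\|\bar z_0\|_{L^2(\Gamma_0)}^2=0$, so $\bar z_0\equiv0$ because $\kappa_0>0$. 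Then $z^n\to0$ in $C([0,T];H^{1-\delta}(\Omega))$ and $z^n_t\to0$ in $C([0,T];H^{-\delta}(\Omega))$, so $lot_\delta(z^n)\le C_\delta\sup_{[0,T]}\{\|z^n\|_{H^{1-\delta}}^2+\|z^n_t\|_{H^{-\delta}}^2\}\to0$, contradicting $lot_\delta(z^n)=1$; this contradiction proves the proposition. The hard part is precisely this last step: invoking the unique continuation property for the (degenerate--parabolic) $z$--equation from only the portion $\Sigma_1$ of the lateral boundary, with the limited weak--limit regularity available, together with the bookkeeping showing that the residual forcing inherited from the discarded exponential part of $u^n$ does not obstruct $\Delta\bar z_0=0$. (The compactness step is routine once the a priori estimates of Proposition \ref{id} are in hand; the geometric hypotheses on $\Gamma_0$ enter only through those estimates.)
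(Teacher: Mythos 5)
Your overall scheme is the same compactness--uniqueness argument the paper uses (normalize $lot_\delta(z^n)=1$, let the damping $D^n\to0$, extract weak limits, obtain over-determined data on $\Sigma_1$, conclude the limit vanishes, contradict the normalization). The genuine gap is in the identification of the limiting dynamics, and it invalidates your unique-continuation step. The actual $z$-dynamics is the \emph{wave} equation $z_{tt}-b\Delta z=-\gamma u_{tt}+f$ with $\partial_\nu z+\kappa_1 z_t=0$ on $\Gamma_1$ and $\partial_\nu z+\kappa_0 z=0$ on $\Gamma_0$: this is what the explicit formula for $\mathbb{A}$, the energy \eqref{E1}, the energy identity \eqref{e1id} (whose derivation cancels $b(A^{1/2}z_t,A^{1/2}z)$ against $-b(A^{1/2}z,A^{1/2}z_t)$ and produces the boundary term $b\int_{\Gamma_1}\kappa_1 z_t^2$), and all the multiplier identities encode; in \eqref{zprob} as printed the arguments $z$ and $z_t$ are inadvertently swapped, and it should read $z_{tt}+bA\bigl(z+N(\kappa_1N^*Az_t)\bigr)=-\gamma u_{tt}+f$. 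Taking \eqref{zprob} literally, you obtain in the limit $\bar z_{tt}-b\Delta\bar z_t=0$, i.e.\ a heat equation for $w=\bar z_t$, and you invoke parabolic unique continuation from lateral Cauchy data on $\Sigma_1$. With the correct equation the limit is the wave equation $\bar\zeta_{tt}=b\Delta\bar\zeta$ with $\partial_\nu\bar\zeta=0$ and $\bar\zeta_t=0$ on $\Sigma_1$, so the required input is uniqueness for the \emph{wave} operator with over-determined Cauchy data on a boundary portion (Holmgren/Tataru type, applied to $v=\bar\zeta_t$), which in particular requires the observation time $T$ to be sufficiently large because of finite speed of propagation -- a different and more delicate tool than the parabolic one you use. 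This is exactly the route the paper takes.

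The misidentification also forces your second, unsound step. Because your (parabolic) limit equation becomes vacuous once $\bar z_t\equiv0$, you try to recover $\Delta\bar z_0=0$ by ``matching time-independent parts'' against a forcing described as the weak limit of $-\tfrac{c^4}{b^2}\gamma e^{-\frac{c^2}{b}t}u_0^n$. That limit is not available: $E_1$ does not control $u$, so $u_0^n$ is not bounded in any topology that would give such a weak limit (you yourself note $u^n$ need not be bounded), and the bookkeeping with the discarded part $e^{-\frac{c^2}{b}t}u_0^n$ cannot be justified as written. In the correct setting this detour is unnecessary: once $\bar\zeta_t\equiv0$ (hence $\bar\zeta_{tt}=0$), the wave equation limit directly yields $\Delta\bar\zeta=0$ in $\Omega$ with $\partial_\nu\bar\zeta=0$ on $\Gamma_1$ and $\partial_\nu\bar\zeta+\kappa_0\bar\zeta=0$ on $\Gamma_0$, and $\kappa_0>0$ forces $\bar\zeta\equiv0$, which is how the paper concludes. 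The remaining ingredients of your proposal (compactness via Aubin--Lions/Simon, $\gamma u_{tt}^n\to0$ in $L^2(Q)$, $\bar z_t|_{\Sigma_1}=0$ from the lower bound on $\kappa_1$, the final contradiction with the normalization) match the paper; note only that your derivation of the uniform bound on $E_1^n$ from \eqref{e1id} and \eqref{e1rec} closes only for $T$ large, which is harmless since the proposition is used for large $T$, whereas the paper simply restricts to sequences with uniformly bounded $E_1^n(0)$.
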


\begin{proof}
	As pointed out in \eqref{e1rec}, we have $$lot_\delta(z)\leq C_\delta\sup_{t\in[0,T]}\left\{\|z\|_{H^{1-\delta}(\Omega)}^2+\|z_t\|_{H^{-\delta}(\Omega)}^2\right\},$$for $\delta \in (0,1/2)$. Then we prove Proposition \eqref{compuniq} as a corollary of the following Lemma \begin{lemma}
		\label{compunilem} For every $\delta \in (0,1/2) $,there exists a constant $C_{T,\delta} >0 $ such that \begin{equation}
			\label{cu1} \|(z,z_t)\|_{L^2(0,T; H^{1-\delta}(\Omega) \times H^{-\delta}(\Omega))}^2 \leqslant C_T\left[ b\int_{\Sigma_1} \kappa_1|z_t|^2 d\Sigma_1 
			+ \int_Q \gamma|u_{tt}|^2 dQ \right]
		\end{equation}
	\end{lemma}\begin{proof}
		Using the notation of \cite{simon1986compact}, let $X = H^1(\Omega)$, $B = H^{1-\delta}(\Omega)$ and $Y = H^{-\delta}(\Omega).$ Then it follows from \cite[Theorem 16.1]{lions_non-homogeneous_1972} that the injection of $X$ in $B$ is compact. Moreover, since $\delta \in (0,1/2)$, \cite[Theorem 12.4]{lions_non-homogeneous_1972} allows us to write $$Y = H^{-\delta}(\Omega) = [L^2(\Omega),H^{-1}(\Omega)]_\delta,$$ and then the injection of $B$ in $Y$ is continuous (even dense). Introduce the space $\Lambda$ as $$\Lambda \equiv \{v \in L^2(0,T;X); \dot{v} \in L^2(0,T; Y)\}$$ equipped with the norm $$\|v\|_{\Lambda} = \|v\|_{L^2(0,T;X)} + \|\dot{v}\|_{L^2(0,T;Y)}.$$ Then it follows from \cite{simon1986compact} that the injection of $W$ into $L^2(0,T;B)$ is compact. We are then ready for proving \eqref{cu1}
		
		By contradiction, suppose that there exists a sequence of initial data $\{u_{0n},u_{1n},u_{2n}\}$ with corresponding $E_1^n(0)$ energy uniformly (in $n$) bounded generating  a sequence $\{u_n, \dot{u}_n, \ddot{u}_n\}$ of solutions of problem \eqref{usist} with related 
		sequence $$\left\{z_n = \dfrac{c^2}{b} u_n + \dot{u}_n, \dot{z}_n = \dfrac{c^2}{b}\dot{u}_n + \ddot{u}_n\right\}$$ solutions of problem \ref{usistz} such that 
		\begin{subnumcases}{}
			\|z_n\|_{L^2(0,T; H^{1 - \delta}(\Omega))}^2 +  \|\dot{z}_n\|_{L^2(0,T; H^{- \delta}(\Omega))}^2 \equiv 1 \label{E10-3}	 \\[2mm]
			\dfrac{c^2}{b} \int_0^T \int_\Omega \gamma(\ddot{u}_n)^2 dQ + \int_0^T \int_{\Gamma_1}\kappa_1 (\dot{z}_n)^2d\Sigma_1 \to 0, \ \mbox{as} \ n \to +\infty. \label{E10-4}
		\end{subnumcases}
		
		From idenity \eqref{e1id} (with $f = 0$) we see that the uniform boundedness $E_1^n(0)$ implies uniform boundedness of $E_1^n(t)$, $ t \in [0,T]$. Therefore, one might choose a (non--relabeled) subsequence satisfying\begin{subequations}\begin{align}
				z_{n} \to \ \mbox{some } \zeta, \ \mbox{weak}^\ast \ \mbox{in } L^\infty(0,T;H^1(\Omega)) 
				\label{E10-72a} \\ \dot{z}_{n} \to \ \mbox{some } \zeta_1, \ \mbox{weak}^\ast \ \mbox{in } L^\infty(0,T; L^2(\Omega)) \hookrightarrow L^2(0,T; H^{ - \delta}(\Omega)); \label{E10-72b} \\ 
				\gamma^{1/2}\dot{u}_{n} \to \ \mbox{some } \eta, \ \mbox{weak}^\ast \ \mbox{in } L^\infty(0,T;L^2(\Omega)); \label{E10-72c} 
		\end{align}\end{subequations}  It easily follows from distributional calculus that $\dot{\zeta} = \zeta_1$ and, in the limit, the functions $\zeta$ and $\eta$ satisfy the equation \begin{subnumcases}{}
			\ddot{\zeta} = b\Delta \zeta -\gamma^{1/2} \dot{\eta} \hspace{7cm} \mbox{in} \ Q  \label{E10-9a} \\[2mm]
			\gamma^{1/2}\dot{\zeta} = \dfrac{c^2}{b}\eta + \dot{\eta} \label{E10-9aa} \\[2mm]
			\left[\dfrac{\partial \zeta}{\partial \nu}+ \kappa_1\dot{\zeta}\right]\biggr\rvert_{\Sigma_1} = 0 ; \qquad \left[\dfrac{\partial \zeta}{\partial \nu}+ \kappa_0\zeta\right]\biggr\rvert_{\Sigma_0} = 0. \label{E10-9c} 
		\end{subnumcases} plus respective initial data.
		
		It follows from the weak convergence that there exist $M$ independent of $n$ such that \begin{equation}
			\|(z_n, \dot{z}_n)\|_{L^\infty(0,T; H^{1}(\Omega) \times H^{-\delta}(\Omega))} = \|z_n\|_\Lambda \leqslant \ M,\label{E10-17}
		\end{equation} for all $n$. Then, by \emph{compactness} (of $\Lambda$ in $L^2(0,T; H^{1-\delta}(\Omega))$ there exists a subsequence, still indexed by $n$, such that \begin{equation}
			z_n \to \zeta \ \mbox{strongly in } L^2(0,T; H^{1 - \delta}(\Omega)). \label{E10-21}
		\end{equation} 
	
		Next we show that $\eta$ and $\zeta$  are zero elements. Indeed, from (\ref{E10-4}) 
		we obtain that  
		$\gamma^{1/2}\ddot{u}_n \to 0$ in $L^2(0,T;L^2(\Omega) $ and
		$\dot{z}_n\rvert_{\Gamma_1}  \to 0 $ in $ L^2(0,T;L^2(\Gamma_1)).$ This implies that
		$\dot{\eta} =0 $ and $\dot{\zeta}|_{\Gamma_1} =0$. 
		Indeed, the last claim follows from 
		${\gamma^{1/2}} \ddot{u}_n \rightarrow \dot{\eta}$ in $H^{-1} (0,T;L^2(\Omega) $
		where by the uniqueness of the limit one must have $\dot{\eta} \equiv 0$ .
		Similar argument applies to infer $\dot{\zeta}|_{\Gamma_1} =0.$
		
		Next, passing to the limit as $n \to \infty$  yields the following over determined (on $\Gamma_1$) problem:
		\begin{subnumcases}{}
			\ddot{\zeta} = b\Delta \zeta  \hspace{7cm} \mbox{in} \ Q  \label{E10-9a2} \\[2mm]
			\gamma^{1/2}\dot{\zeta} = \dfrac{c^2}{b}\eta  \label{E10-9aa2} \\[2mm]
			\left[\dfrac{\partial \zeta}{\partial \nu}\right]\biggr\rvert_{\Sigma_1} = 0 ; \qquad \left[\dfrac{\partial \zeta}{\partial \nu}+ \kappa_0\zeta\right]\biggr\rvert_{\Sigma_0} = 0; \qquad \dot{\zeta}_t\rvert_{\Gamma_1} =0 \label{E10-9c2}
		\end{subnumcases} 
		plus respective initial data.
		
		The overdetermined $\zeta$--problem implies  in particular with $v\equiv \zeta_t $
		$$\ddot{v} = b \Delta v $$ with the overdetermined boundary conditions 	$$\dfrac{\partial v}{\partial  \nu}\biggr\rvert_{\Gamma_1} = 0; \qquad  v\rvert_{\Gamma_1} = 0$$
		which yields overdetermination of boundary  data on $\Gamma_1$ for the wave operator.
		This gives  $v \equiv 0 $, hence $\zeta_t \equiv  0$ and $\zeta_{tt} =0 $   distributionally .
		Using this information in (\ref{E10-9a}) yields
		$$\Delta \zeta =0; \qquad  \dfrac{\partial \zeta}{\partial \nu}\biggr\rvert_{\Gamma_1} =0;\qquad
		\left[\dfrac{\partial \zeta}{\partial \nu} + \kappa_0 \zeta\right]_{\Gamma_0}  =0.$$
		Standard elliptic estimate, along with $\kappa_0 > 0$  gives $\zeta \equiv 0 $ in  $Q.$
		
		Finally, weak$^\ast$ convergence of $z_n$ in $L^\infty(0,T; L^2(\Omega)$ and the compacity of $L^2(\Omega)$ into $H^{-\delta}(\Omega)$ (see \cite[Theorem 16.1 with $s = 0$ and $\varepsilon = \delta$]{lions_non-homogeneous_1972})[so $\dot{z}_n(t) \to \dot{\zeta}(t)$ strongly in $H^{-\delta}(\Omega)$ for a.e. $t \in [0,T]$] allow us to compute (due to Lebesgue dominated convergence theorem
		): 
		\begin{align*}
			\lim\limits_{n \to \infty} \|z_n\|_{L^2(0,T; H^{-\delta}(\Omega)}^2 = \lim\limits_{n \to \infty} \int_0^T\|z_n(t)\|_{H^{-\delta}(\Omega)}^2dt =  \int_0^T\lim\limits_{n \to \infty}\|z_n(t)\|_{H^{-\delta}(\Omega)}^2dt \\
			= \|\dot{\zeta}\|_{L^2(0,T; H^{-\delta}(\Omega)}^2 = 0
		\end{align*} since $\dot{\zeta} \equiv 0$ in $Q.$ Then, passing with the limit as $n \to \infty$ in \eqref{E10-3} we have $$0 = \|z\|_{L^2(0,T;H^{1-\delta}(\Omega))} = 1,$$ which is a contradiction. The Lemma is proved.
	\end{proof}
Lemma \ref{compunilem}  implies in a straightforward way the result of the  proposition \ref{compuniq}.
\end{proof}

We are ready to establish the exponential decay of the the energy functional $E_1$.
\begin{theorem}\label{thm34}
	Assume that $f=0$. Hence, the energy functional $E_1$ is exponentially stable, i.e. there exists $T>0$ and constants $M,\omega>0$ such that 
	\begin{equation}\label{e1exp}
		E_1(t) \leq M e^{-\omega t} E_1(0),\quad\mbox{for}~t>T.
	\end{equation}
\end{theorem}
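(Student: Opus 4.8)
The plan is to extract from Propositions \ref{id} and \ref{compuniq} an observability-type inequality for $E_1$ over a sufficiently long time window, and then to iterate it using the autonomous (semigroup) structure of the problem. Throughout we work with classical solutions provided by Theorem \ref{t1-1}, so that the multiplier identities of Proposition \ref{id} and the compactness--uniqueness bound of Proposition \ref{compuniq} apply; the extension to arbitrary data in $\mathbb{H}$ is the density argument already anticipated. Fix $s\in(0,T/2)$. Setting $f=0$ in the reconstruction estimate \eqref{e1rec} and using Proposition \ref{compuniq} to absorb $lot_\delta(z)$, we obtain a constant $C_1>0$ \emph{independent of $T$} (the hidden constant in \eqref{e1rec} depends only on $c,b,\kappa_0,\kappa_1$ and $\max_{\overline\Gamma}|h|$) together with a $T$--dependent $C_T>0$ such that, writing $D:=b\int_{\Sigma_1}\kappa_1 z_t^2\,d\Sigma_1+\int_Q\gamma u_{tt}^2\,dQ$,
\begin{equation}\label{combest-thm34}
\int_s^{T-s}E_1(t)\,dt\;\leq\;C_1\big[E_1(s)+E_1(T-s)\big]+C_T\,D .
\end{equation}

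The next step is to exploit the energy identity \eqref{e1id} (with $f=0$). It shows that $t\mapsto E_1(t)$ is nonincreasing and that $E_1(0)-E_1(\sigma)$ equals the dissipation accumulated over $[0,\sigma]$; in particular $D=E_1(0)-E_1(T)$, $E_1(s)\leq E_1(0)\leq E_1(T-s)+D$, and, by monotonicity, $\int_s^{T-s}E_1(t)\,dt\geq(T-2s)\,E_1(T-s)$. Inserting these into \eqref{combest-thm34} and rearranging gives $(T-2s-2C_1)\,E_1(T-s)\leq(C_1+C_T)\,D$. Since $C_1$ does not depend on $T$, we now fix $T$ large enough that $T-2s-2C_1>0$, which yields the observability inequality $E_1(T-s)\leq\widehat C_T\,D$ with $\widehat C_T:=(C_1+C_T)/(T-2s-2C_1)$.

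To close the argument, use $E_1(T)\leq E_1(T-s)$ (monotonicity) together with $D=E_1(0)-E_1(T)$: the observability inequality becomes $E_1(T)\leq\widehat C_T\big(E_1(0)-E_1(T)\big)$, i.e. $E_1(T)\leq\rho\,E_1(0)$ with $\rho:=\widehat C_T/(1+\widehat C_T)\in(0,1)$. As the problem is autonomous with $f=0$ and $E_1(t)$ is a fixed quadratic functional of the state $\Phi(t)$, applying this inequality on each window $[nT,(n+1)T]$ gives $E_1((n+1)T)\leq\rho\,E_1(nT)$, hence $E_1(nT)\leq\rho^n E_1(0)$; combined with the monotonicity of $E_1$ on each window this delivers \eqref{e1exp} with $\omega=\tfrac1T\log(1/\rho)>0$ and $M=1/\rho$.

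The one delicate point is the passage from \eqref{combest-thm34} to the observability inequality: it is essential that the coefficient of the endpoint energies $E_1(s)+E_1(T-s)$ coming out of the multiplier identities is independent of $T$, so that the lower bound $(T-2s)E_1(T-s)$ produced by monotonicity genuinely absorbs it once $T$ is large. The analytically substantive work -- the tangentially bent vector field adapted to the convex, star-shaped portion $\Gamma_0$, the microlocal tangential trace estimate on the absorbing portion $\Gamma_1$, and the compactness--uniqueness step eliminating $lot_\delta(z)$ -- has already been done in Propositions \ref{id} and \ref{compuniq}; what remains here is only their assembly with the energy identity.
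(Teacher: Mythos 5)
Your proposal is correct and follows essentially the same route as the paper: combine the reconstruction estimate \eqref{e1rec} (with $f=0$ and $lot_\delta(z)$ absorbed via Proposition \ref{compuniq}) with the energy identity \eqref{e1id} to obtain $E_1(T)\leq\rho\,E_1(0)$ with $\rho<1$ for $T$ large, then iterate over successive windows using autonomy and monotonicity. The only cosmetic difference is that you lower-bound $\int_s^{T-s}E_1\,dt$ by $(T-2s)E_1(T-s)$ directly, whereas the paper completes the integral to $[0,T]$ and uses $TE_1(T)\leq\int_0^T E_1\,dt$ before fixing $T=2C$; both hinge on the same observation you flag, namely that the constant multiplying the endpoint energies is independent of $T$.
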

\begin{proof}
	Using identity \eqref{e1id} we have
	\begin{equation*}
		\left(\int_0^s + \int_{T-s}^T\right) E_1(t) dt \leq 2sE_1(0).
	\end{equation*}
	Since $s<T/2$ can be taken arbitrarily small, we fix $s<1/2$ in the above inequality and use it to completethe $L^1$--norm of the energy $E_1$ in \eqref{e1rec}. We obtai
	\begin{equation}\begin{aligned}
			\int_0^T E_1(t) dt 
			\lesssim E_1(0) + E_1(s) + E_1(T-s) 
			\\ + C_T\left[ \int_{\Sigma_1} b\kappa_1 z_t^2 d\Sigma_1 
			+ \int_Q \gamma u_{tt}^2 dQ + lot_\delta(z)\right].
	\end{aligned}\end{equation}
	The remaining terms in $s$ are estimated using the dissipativity of $E_1$ (see identity \eqref{e1id} for $f=0$). In fact we rewrite the above as follows
	\begin{equation*}
		\int_0^T E_1(t) dt 
		\lesssim E_1(T) + C_T \left[ \int_{\Sigma_1}b\kappa_1 z_t^2 d\Sigma_1 + \int_Q \gamma u_{tt}^2 dQ + lot_\delta(z) \right].
	\end{equation*}
	The $lot_\delta(z)$ is ``absorved'' using Lemma \ref{compuniq}, thus
	\begin{equation}\label{e1xp_1}
		\int_0^T E_z(t) dt
		\lesssim E_1(T) + C_T\left[ \int_{\Sigma_1}b\kappa_1 z_t^2 d\Sigma_1 + \int_Q \gamma u_{tt}^2 dQ \right].
	\end{equation}
	On the other hand, using identity \eqref{e1id} (with $f=0$) once more, we deduce
	\begin{equation}\begin{aligned}\label{e1xp_2}
			TE_1(T)
			\lesssim \int_0^TE_1(t)dt + C_T\left[ \int_{\Sigma_1} b\kappa_1 z_t^2 d\Sigma_1 + \int_Q \gamma u_{tt}^2 dQ \right].
	\end{aligned}\end{equation}
	Combining \eqref{e1xp_1} and \eqref{e1xp_2} we arrive at
	\begin{equation*}
		(T-C)E_1(T) + \int_0^T E_1(t) dt
		\leq C_T \left[ \int_{\Sigma_1} b\kappa_1 z_t^2 d\Sigma_1 + \int_Q \gamma u_{tt}^2 dQ 
		\right],
	\end{equation*}
	for some $C>0$. Choosing $T=2C$ and replacing the ``damping'' term using identity \eqref{e1id} (with $f=0$) we rewrite the above estimate as follows
	\begin{equation*}
		E_1(T) + \int_0^T E_1(t) dt
		\lesssim C_T[E_1(0) - E_1(T)]
	\end{equation*}
	which implies
	\begin{equation*}
		E_1(T) \leq \underbrace{\frac{C_T}{1+C_T}}_{\mu} E_1(0),
	\end{equation*}
	where $0<\mu<1$ does not depend on the solution. Repeating the process on the interval $[mT,(m+1)T]$ and we obtain $E_1((m+1)T)\leq\mu E_1(mT)$, for every $m\geq 0$. This implies
	\begin{equation*}
		E_1(mT) \leq \mu^m E_1(0),
	\end{equation*}
	for every $m\geq 1$. Thus, for $t>T$ we write $t = mT + s$, with $s\in(0,T]$ and $m\geq1$, which implies
	\begin{equation*}
		E_1(t) \leq E_1(mT) 
		\leq \mu^m E_1(0) = e^{-|\ln{\mu}|m}E_1(0) = e^{-|\ln{\mu}|\frac{t-s}{T}} E_1(0) = \frac{1}{\mu}e^{-\frac{|\ln{\mu}|}{T}t}E_1(0),
	\end{equation*}
	which implies \eqref{e1exp} with $\omega = |\ln{\mu}|/T$ and $M=1/\mu$.
\end{proof}

The previous result is key to establish the exponential stability of $E(t)$, which is given next.

\begin{proof}[\bf Proof of Theorem \ref{thm34n}] Notice that the exponential decay for $E_1$ obtained in Theorem \eqref{thm34} implies exponential decay of the quantities $\|z\|_{\mathcal{D}(A^{1/2})}, \|z_t\|_{L^2(\Omega)}$, and we will show that this implies exponential decay of $E$. In view of Lemma \ref{equinorm}, the only remaining quantity we need to show exponential decay is $\| u\|_{\mathcal{D}(A^{1/2})}$ and this follows from the fact that $bu_t + c^2u = z$. Indeed, the variation of parameter formula implies that \begin{equation}
		\label{NE4-35} u(t) = e^{-\frac{c^2}{b}t}u_0 + \int_0^t e^{-\frac{c^2}{b}(t-\tau)}z(\tau)d\tau,
	\end{equation} then, computing the $\mathcal{D}(A^{1/2})$--norm both sides we estimate \begin{equation}
		\|u(t)\|_{\mathcal{D}(A^{1/2})} \leqslant e^{-\frac{c^2}{b} t} \|u_0\|_{\mathcal{D}(A^{1/2})} + \int_0^t e^{-\frac{c^2}{b}(t-\tau)}\|z(\tau)\|_{\mathcal{D}(A^{1/2})}d\tau \label{E77nn-15}
	\end{equation} hence it follows from \eqref{e1exp} that \begin{align*}
		\|u(t)\|_{\mathcal{D}(A^{1/2})} &\leqslant e^{-\frac{c^2}{b} t} \|u_0\|_{\mathcal{D}(A^{1/2})} + ME_1(0)\int_0^t e^{-\frac{c^2}{b}(t-\tau)-\omega\tau}d\tau \nonumber \\ &\leqslant e^{-\frac{c^2}{b} t} E(0) + \dfrac{(c^2 -b\omega)(e^{-\omega t}-e^{-\frac{c^2}{b}t})}{\omega c^2}ME(0)\leqslant \overline{M}e^{-\overline{\omega}t}E(0).
	\end{align*} where we have made the benign assumption that $\dfrac{c^2}{b} > \omega$ from \eqref{e1exp}, as if $\omega \geqslant \dfrac{c^2}{b}$ we use formula \eqref{e1exp} with $\omega_1 := \dfrac{c^2}{b} - \varepsilon$ so $\omega > \omega_1$ and $\dfrac{c^2}{b} > \omega_1$.
	
	The proof is complete.
\end{proof}

\section*{Acknowledgments} The research of I. L. was partially supported by the National Science Foundation under Grant DMS-1713506. The work was partially carried out while I. L. was member of the MSRI program “Mathematical Problems in Fluid Dynamics” during the Spring 2021 semester (NSF DMS-1928930) of the University of California, Berkeley; while M. B. was a member of the Weierstrass Institute for Applied Analysis and Stochastics, Berlin, Germany. The authors thank their host organizations. 

\bibliographystyle{abbrvurl} 
\bibliography{ref2.bib}

\end{document}